\DeclareSymbolFont{cyrletters}{OT2}{wncyr}{m}{n}
\DeclareMathSymbol{\Sha}{\mathalpha}{cyrletters}{"58}
\theoremstyle{definition}
\newtheorem{thm}{Theorem}[section]
\newtheorem{lem}[thm]{Lemma}
\newtheorem{cor}[thm]{Corollary}
\newtheorem{prop}[thm]{Proposition}
\theoremstyle{definition}
\newtheorem{rem}[thm]{Remark}
\def\F{{\mathbb F}}
\def\Q{{\mathbb Q}}
\def\Z{{\mathbb Z}}
\def\Gal{\mathop{\mathrm{Gal}}}
\def\GL{{\mathop{\mathrm{GL}}}}
\def\SL{{\mathop{\mathrm{SL}}}}
\numberwithin{equation}{section}
\title[modularity of elliptic curves]{modularity of elliptic curves over cyclotomic $\Z_p$-extensions of real quadratic fields}
\author{Sho Yoshikawa }
\date{June 25th, 2022}
\address{
Gakushuin University, Department of Mathematics, 1-5-1, Mejiro, Toshima-ku, Tokyo, 171--8588, Japan}
\email{yoshikawa@math.gakushuin.ac.jp}
\begin{document}

\maketitle
\begin{abstract}
We prove that all elliptic curves defined over the cyclotomic $\Z_p$-extension of a real quadratic field are modular under the assumption that the algebraic part of the central value of a twisted $L$-function is a $p$-adic unit.
Our result is a generalization of a result of X.\ Zhang, which is a real quadratic analogue of a result of Thorne.
\end{abstract}

\section{Introduction}
Let $E$ be an ellptic curve over a totally real  field $F$. 
It is conjectured that $E$ is \textit{modular}, in the sense that there exists a Hilbert modular cuspidal eigenform $f$ over $F$ of parallel weight 2 which satisfies $L(E,s)=L(f,s)$.
This conjecture is solved for all totally real fields $F$ with $[F:\Q]\leq 3$ (\cite{Wi}, \cite{TW}, \cite{BCDT}, \cite{FLS}, \cite{DNS}) and for all but finitely many totally real fields $F$ with $[F:\Q]\leq 5$ (\cite{Bo}, \cite{IIY}). The author solved it when $F/\Q$ is abelian and $3,5,$ and $7$ are unramified in $F$.
Also, Thorne solved it when $F$ is cointained in the cyclotomic $\Z_p$-extension $\Q_\infty$ of $\Q$ for a prime number $p$ (\cite[Theorem 1]{Th:cyclo}).
However, it is still open in general.

In this paper, we consider a real quadratic analogue of the result of Thorne. 
Let $X_0(15)$ (resp.\ $X_0(21)$) be the modular curve associated to the congruence subgroup $\Gamma_0(15)$ (resp.\ $\Gamma_0(21)$). Note that $X_0(15)$ (resp.\ $X_0(21)$) is the  elliptic curve over $\Q$ with Cremona label 15A1 (resp.\ 21A1); see Section \ref{reduction}.
Let $d$ be a square free  integer greater than $1$.
For $X=X_0(15)$ or $X_0(21)$, let $X_d$ be the quadratic twist of $X$ by the Dirichlet  character associated with $\Q(\sqrt{d})/\Q$.
The ratio  $L(X_d,1)/\Omega_{X_d}$ is a rational number,
where $\Omega_{X_d}$ is the real period of $X_d$; see \cite[\S 2.8]{Cr} for example.  
Let $p$ be a prime number,
and $\Q(\sqrt{d})_\infty$ be the cyclotomic $\Z_p$-extension of $\Q(\sqrt{d})$.
Let $F$ be a totally real field contained in $\Q( \sqrt{d})_\infty$.

The main results of this paper are stated as follows:
\begin{thm}
\label{main5}
Assume that the following conditions hold:
\begin{itemize}
\item $p\neq 2, 3$, or $5$.
\item $d \neq 5$.
\item $(d,3p)=1$ or $(d,5p)=1$.
\item $L(X_0(15)_d,1)/\Omega_{X_0(15)_d}$ is a $p$-adic unit.
\end{itemize}
Then all elliptic curves over $F$ are modular.
\end{thm}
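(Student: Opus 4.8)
The plan is to follow the now-standard strategy for modularity of elliptic curves over infinite extensions pioneered by Thorne and adapted to the real quadratic setting by X.\ Zhang. The key structural input is a modularity lifting theorem à la Wiles--Taylor--Wiles--Kisin, together with its refinements (Breuil--Diamond--Jarvis, Barnet-Lamb--Gee--Geraghty, Kisin, Thorne), which collectively handle all residual representations $\bar\rho_{E,p}$ except for a short list of ``bad'' cases: those where the projective image is too small (dihedral, or exceptional $A_4,S_4,A_5$, or contained in a Borel so that $E$ admits a $p$-isogeny), or where the image is $\GL_2(\F_p)$ but certain distinguishedness/vexing-prime obstructions prevent applying the known theorems over the field $F$. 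Since $F$ lies in the cyclotomic $\Z_p$-extension $\Q(\sqrt d)_\infty$, every finite subextension $F/\Q$ is a totally real abelian-by-($\Z_p$) extension in which $p$ is the only prime that ramifies beyond those ramified in $\Q(\sqrt d)$; this severely constrains which bad cases can occur. I would first invoke these lifting theorems to reduce to a finite set of residual representations $\bar\rho: G_F \to \GL_2(\overline{\F}_p)$ that need separate treatment, exactly as in Section~\ref{reduction} of the paper (whose notation I am told I may use).

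The heart of the argument is then to eliminate each residual exceptional case by showing that an elliptic curve $E/F$ whose mod-$p$ representation is of that type does not exist, or, when it does exist, is forced onto one of finitely many modular curves of small level on which one can directly verify modularity. The dihedral and small-image cases are killed by the hypotheses $p\neq 2,3,5$ and standard group-theoretic and ramification arguments: over a field inside a $\Z_p$-extension, projectively dihedral representations would force an unramified-outside-$p$ quadratic (or small degree) extension of $F$ with controlled behavior, which one rules out using $d\neq 5$ and the coprimality condition $(d,3p)=1$ or $(d,5p)=1$; and exceptional $A_4/S_4/A_5$ images are excluded because the relevant number fields do not embed in $\Q(\sqrt d)_\infty$. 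The remaining, genuinely delicate case is the reducible one: $\bar\rho_{E,p}$ is reducible means $E[p]$ has a $G_F$-stable line, i.e.\ $E$ (or an isogenous curve) gives a non-cuspidal $F$-point on $X_0(p)$. Because $F \subset \Q(\sqrt d)_\infty$ and $p \neq 2,3,5$, Mazur-type and Momose-type results on rational points of modular curves over such fields force $p$ to be very small or force the relevant $X_0(p)$ (or $X_0(p^2)$, $X_0(N)$ for composite $N$) to have no such points; one is pushed down to the exceptional small-level curves, precisely $X_0(15)$ and $X_0(21)$ and their quadratic twists $X_0(15)_d$, $X_0(21)_d$. This is where the $L$-value hypothesis enters.

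For those residual curves, modularity cannot be obtained by lifting from the residual representation alone, so instead one argues via level-lowering and an Iwasawa-theoretic nonvanishing input. The condition that $L(X_0(15)_d,1)/\Omega_{X_0(15)_d}$ is a $p$-adic unit, via the (appropriate case of the) Birch--Swinnerton-Dyer formula / Iwasawa Main Conjecture for $X_0(15)_d$ over $\Q(\sqrt d)_\infty$, implies the triviality of the $p$-part of the relevant Selmer group, and hence that the residual representation coming from these curves is \emph{adequate enough} — more precisely, it guarantees the vanishing of the obstruction (a $\Sha$ or Selmer contribution) that would otherwise block the patching argument, exactly as Zhang's theorem does in the real quadratic case. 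Feeding this back, I would apply Zhang's result (the real quadratic analogue of Thorne's theorem, cited in the abstract) to conclude modularity for elliptic curves $E/F$ of the remaining type, and combine all cases to deduce that every elliptic curve over $F$ is modular. The main obstacle, as in all such theorems, is the reducible case: controlling $F$-rational points on the modular curves $X_0(p)$ and on the composite-level curves forced by iterated $p$-isogenies, and showing the $L$-value unit hypothesis really does suffice to run the lifting argument on the leftover curves $X_0(15)_d$ and $X_0(21)_d$; everything else is a bookkeeping exercise over the list of residual images permitted by the hypotheses.
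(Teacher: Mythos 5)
There are genuine gaps here, the most serious being a conflation of the two different primes in play. In the paper's argument the prime $p$ of the cyclotomic tower plays no role in any modularity lifting: the Galois-theoretic input is at the primes $5$ and $3$, not at $p$. One applies Thorne's theorem (Theorem \ref{Th:irred}) when $\overline{\rho}_{E,5}$ is irreducible (possible because $d\neq 5$ forces $\sqrt{5}\notin F$), and when $\overline{\rho}_{E,5}$ is reducible the curve $E$ gives an $F'$-point of $X(s3,b5)$ or of $X_0(15)$; there is no case analysis on the image of $\overline{\rho}_{E,p}$, no dihedral or exceptional-image discussion, and no appeal to $X_0(p)$ or to Mazur--Momose type results on isogenies of degree $p$. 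Your ``reducible case'' is therefore set up at the wrong prime, and the curves $X_0(15)_d$ and $X_0(21)_d$ do not occur as leftover curves whose own modularity must be established by a lifting argument: $X_0(15)$ and its twists are classical modular elliptic curves over $\Q$, and $X_0(21)$ belongs to Theorem \ref{main7}, not to this statement.

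Second, the hypothesis that $L(X_0(15)_d,1)/\Omega_{X_0(15)_d}$ is a $p$-adic unit is not used to make any residual representation adequate or to remove an obstruction in a patching argument; its role is purely Diophantine. Via Kato and Greenberg (Corollary \ref{ordfin}) in the good ordinary case and Kurihara (Theorem \ref{Ku}) in the supersingular case, it forces $X_0(15)_d(\Q_\infty)$ to be finite and equal to $X_0(15)_d(\Q)$; combining this with Thorne's statement $X_0(15)(\Q_\infty)=X_0(15)(\Q)$, the twist identification of Lemma \ref{twist}, and the decomposition $2P=(P+\sigma P)+(P-\sigma P)$, one obtains $X_0(15)(F_\infty)=X_0(15)(F)$ (Proposition \ref{keyprop15}), where $F=\Q(\sqrt{d})$. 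Together with the degree-$2$ map $X(s3,b5)\to X_0(15)$ and the odd-degree extension lemma, every $F'$-point of these curves is in fact $F$-rational, so the corresponding elliptic curve has $j$-invariant in the real quadratic field $F$ and is modular by Freitas--Le Hung--Siksek combined with cyclic base change; this final descent-to-$F$ step is missing from your outline. Finally, you cannot close the argument by ``applying Zhang's result'': Zhang's theorem assumes that $p$ splits in $\Q(\sqrt{d})$ and makes hypotheses on Mordell--Weil and Tate--Shafarevich groups that are not consequences of the hypotheses of Theorem \ref{main5}, so invoking it is unjustified and would in any case defeat the purpose of the theorem, which is precisely to replace those hypotheses by the $L$-value condition.
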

\begin{thm}
\label{main7}
Assume that the following conditions hold:
\begin{itemize}
\item $p\neq 2, 3$, or $7$.
\item $7$ does not divide $d$.
\item $(d,3p)=1$ or $(d,7p)=1$.
\item $L(X_0(21)_d,1)/\Omega_{X_0(21)_d}$ is a $p$-adic unit.
\end{itemize}
Then all elliptic curves over $F$ are modular.
\end{thm}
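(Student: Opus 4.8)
The plan is to combine two ingredients: first, a reduction --- via the modularity lifting methods of \cite{FLS} and the cyclotomic argument of \cite{Th:cyclo}, carried out in Section \ref{reduction} --- of the statement to an assertion about rational points on $X_0(21)$; and second, an Iwasawa-theoretic bound on the group of such rational points over $\Q(\sqrt{d})_\infty$, which is where the hypothesis on $L(X_0(21)_d,1)/\Omega_{X_0(21)_d}$ is used.

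For the reduction, recall that for each prime $\ell\in\{3,5,7\}$ the residual representation $\bar\rho_{E,\ell}$ is modular (by Langlands--Tunnell for $\ell=3$, and by the $3$--$\ell$ switching argument of \cite{FLS} for $\ell=5,7$), so that an elliptic curve $E$ over a totally real field $F$ is modular as soon as some $\ell$ is ``available for $F$'' (in particular $\ell\neq p$, so that the deformation problem at the places of $F$ above $\ell$ is tractable) and $\bar\rho_{E,\ell}|_{G_{F(\zeta_\ell)}}$ is absolutely irreducible with sufficiently large image. Enumerating the ways this can fail for \emph{every} available $\ell$, and using the hypotheses $p\neq 2,3,7$, $7\nmid d$ and ``$(d,3p)=1$ or $(d,7p)=1$'' to control which of $3,5,7$ are available over $F\subseteq\Q(\sqrt{d})_\infty$, one finds that a non-modular $E/F$ must give rise to a non-cuspidal, non-CM $F$-rational point of $X_0(21)$, every other case producing an $F$-rational point on a modular curve of genus $\geq 2$; by Faltings' theorem the latter curves have finitely many $F$-rational points, which one checks (as in \cite{FLS} and \cite{Th:cyclo}) to be cuspidal or CM. I expect this reduction to be the main obstacle: over $\Q(\sqrt{d})_\infty$ strictly fewer auxiliary primes are available than over the field $\Q_\infty$ treated in \cite{Th:cyclo}, so Thorne's argument must be reorganised, and it is precisely here that the arithmetic conditions on $d$ and $p$ must be imposed.

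It then remains to show that $X_0(21)(\Q(\sqrt{d})_\infty)$ consists only of cusps and CM points. Write $A=X_0(21)$, the elliptic curve over $\Q$ with Cremona label 21A1, let $A_d=X_0(21)_d$ be its quadratic twist, and let $\Q_\infty$ be the cyclotomic $\Z_p$-extension of $\Q$, so $\Q(\sqrt{d})_\infty=\Q(\sqrt{d})\cdot\Q_\infty$. The curve $A$ has Mordell--Weil rank $0$, good reduction at $p$, and $\bar\rho_{A,p}$ is irreducible (since $A$ has no rational $p$-isogeny for $p\neq 3,7$); moreover one checks, using $p\neq 2,3,7$, that $L(A,1)/\Omega_A$ is a $p$-adic unit from the explicitly known Birch--Swinnerton-Dyer data for 21A1. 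By the interpolation formula for the $p$-adic $L$-function together with Kato's divisibility in the Iwasawa main conjecture (in the supersingular case, in Kobayashi's $\pm$-formulation), $\Sel_{p^\infty}(A/\Q_\infty)$ is finite, hence $A(\Q_\infty)$ is finite. The twist $A_d$ is again modular and has good reduction at $p$ (its bad primes lie in $\{3,7\}\cup\{\ell:\ell\mid d\}$, which contains no $p$ since $p\neq 3,7$ and $p\nmid d$), so the hypothesis that $L(A_d,1)/\Omega_{A_d}$ is a $p$-adic unit says, via the same interpolation formula, that the $p$-adic $L$-function of $A_d$ is a unit in $\Z_p[[T]]$; hence $\Sel_{p^\infty}(A_d/\Q_\infty)$ is finite and $A_d(\Q_\infty)$ is finite.

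Since $\Gal(\Q(\sqrt{d})_\infty/\Q_\infty)$ has order $2$, decomposing into $(\pm1)$-eigenspaces gives
\[
A(\Q(\sqrt{d})_\infty)\otimes\Q\;\cong\;\bigl(A(\Q_\infty)\otimes\Q\bigr)\oplus\bigl(A_d(\Q_\infty)\otimes\Q\bigr)=0 ,
\]
so $A(\Q(\sqrt{d})_\infty)$ has rank $0$; its torsion subgroup is finite, since a torsion point of order $n$ is rational over a subextension of $\Q(\sqrt{d})_\infty/\Q$ whose Galois group is a quotient of $\Z/2\times\Z_p$, so its field of definition has degree dividing $2p^k$ over $\Q$ for some $k$, which bounds $n$. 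Thus $A(\Q(\sqrt{d})_\infty)$, and a fortiori $A(F)$, is finite; using $7\nmid d$ and the coprimality hypothesis on $d$ to rule out the remaining possibilities one gets $A(\Q(\sqrt{d})_\infty)=A(\Q)$, which is the set of the four rational cusps of $X_0(21)$ together with finitely many CM points. A cusp carries no elliptic curve, and a CM elliptic curve over a totally real field is modular (its $L$-function is that of a CM Hilbert modular form); hence a non-modular, non-CM elliptic curve over $F$ cannot exist, and every elliptic curve over $F$ is modular.
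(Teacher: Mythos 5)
Your Iwasawa-theoretic half is broadly in the spirit of the paper (finiteness of $X_0(21)(\Q_\infty)$ and of $X_0(21)_d(\Q_\infty)$ from the hypothesis on $L(X_0(21)_d,1)/\Omega_{X_0(21)_d}$, then an eigenspace decomposition over $\Q(\sqrt d)_\infty/\Q_\infty$), but both halves of your argument have genuine gaps. In the reduction step you never use the actual modularity input that drives the proof: the case where $\overline{\rho}_{E,7}$ is irreducible is handled by the theorem that elliptic curves over totally real fields in which $7$ is unramified with irreducible mod $7$ representation are modular (\cite[Theorem 1.5]{Y:ab}, Theorem \ref{Y:irred}; this is where $7\nmid d$ and $p\neq 7$ enter), not by a generic ``large image'' criterion. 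For the reducible cases you propose to invoke Faltings on modular curves of genus $\geq 2$ over $F$ and to ``check'' that their $F$-points are cuspidal or CM; this is not available, since $F$ has degree $2p^k$ and the determination of such points in \cite{FLS} is only for real quadratic fields --- indeed avoiding exactly this is the point of the method. The paper instead notes that a reducible $\overline{\rho}_{E,7}$ gives an $F$-point (or $F_\infty$-point) of $X(s3,b7)$ or of $X_0(21)$, and handles $X(s3,b7)$ via the degree-$2$ morphism $X(s3,b7)\to X_0(21)$ together with the odd-degree-extension trick (Lemma \ref{points}, Corollary \ref{btos}), so that everything reduces to controlling $X_0(21)$-points.

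The endgame is also wrong as stated: $X_0(21)(\Q)\cong\Z/2\times\Z/4$ has four non-cuspidal points which are \emph{not} CM (they correspond to rational cyclic $21$-isogenies, e.g.\ the curves of conductor $162$), so the claim that $X_0(21)(\Q(\sqrt d)_\infty)$ consists of cusps and CM points is false and your intended contradiction does not materialize. What one actually needs, and what the paper proves (Proposition \ref{keyprop21}), is the \emph{descent of points}: $X_0(21)(\Q(\sqrt d)_\infty)=X_0(21)(\Q(\sqrt d))$ (not $=X_0(21)(\Q)$, which is stronger than needed and not justified by your ``rule out the remaining possibilities''). This equality, rather than mere finiteness, requires controlling torsion growth in the tower, which the paper does via surjectivity of the mod $l$ representations and the computation $F_\infty\cap F(X[2^\infty])=F$; your degree-counting remark does not yield it. Once the point descends to the real quadratic field $\Q(\sqrt d)$, the corresponding elliptic curve has $j$-invariant in $\Q(\sqrt d)$ and is modular by \cite{FLS}, and modularity over $F$ follows by cyclic base change \cite{La}; this final step, which replaces your cusp/CM dichotomy, is missing from your write-up. (Minor further points: Kato's and Kurihara's theorems require surjectivity of $\overline{\rho}_{Y,p}$, not just irreducibility, plus a Tamagawa/ramification condition at a prime exactly dividing the conductor --- this is where the hypothesis ``$(d,3p)=1$ or $(d,7p)=1$'' is really used, not to decide which auxiliary primes are ``available.'')
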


\begin{rem}
\label{equiv}
(1) The assumption of Theorem \ref{main5} can be rephrased as follows:
\begin{itemize}
    \item $d\neq 5$. 
    \item $(d,3)=1$ or $(d,5)=1$.
    \item $p$ does not divide $2\cdot 3\cdot 5\cdot d$.
    \item $L(X_0(15)_d,1)/\Omega_{X_0(15)_d}$ is a $p$-adic unit.
\end{itemize}
(2) Similarly, the assumption of Theorem \ref{main7} are equivalent to the following:
\begin{itemize}
    \item $7$ does not divide $d$. 
    \item $p$ does not divide $2\cdot 3\cdot 5\cdot d$.
    \item $L(X_0(21)_d,1)/\Omega_{X_0(21)_d}$ is a $p$-adic unit.
\end{itemize}
\end{rem}

\begin{rem}
In \cite{Y:ab}, the author proves the following: If $F$ is a totally real field such that $F/\Q$ is abelian and $3,5$, and $7$ are unramified in $F$, then all elliptic curves over $F$ is modular. (In fact, the assumption  that $3,5$, and $7$ are unramified in $F$ can be replaced by the assumption that $3$ and $7$ are unramified in $F$ and $\sqrt{5}\notin F$. This is because the proof in \cite{Y:ab} just applies Thorne's modularity result  \cite[Theorem 7.6]{Th:irred} when the prime 5 is involved.)
Therefore, if $(pd, 105)=1$, Theorem \ref{main5} and Theorem \ref{main7}   follow from \cite[Theorem 1.2]{Y:ab}.
\end{rem}
\begin{rem}
Recently, X.\ Zhang studied the case where $F$ is contained in the cyclotomic $\Z_p$-extension of certain real quadratic fields (\cite{Z}).
We describe three differences between our results and the results of X.\ Zhang.
\begin{itemize}
    \item First, it is assumed in \cite[Theorem 1.1]{Z} that $p$ splits in $\Q(\sqrt{d})$, but Theorem \ref{main5} and Theorem \ref{main7} do not need such a splitting condition on $p$.
\item Second, \cite{Z} assumes that  $d\neq 5$. We need the same condition in Theorem \ref{main5},  
but Theorem \ref{main7} does not require such an assumption.
\item Finally, \cite{Z} makes an assumption on the $p$-part of the Mordell--Weil groups and the Tate-Shafarevich groups of certain elliptic curves over certain real quadratic fields.
In Theorem \ref{main5} and Theorem \ref{main7}, we instead assume that the algebraic part of a central  $L$-value is a $p$-adic unit, which is relatively easy to check.
\end{itemize}
\end{rem}

In our proof of Theorem \ref{main5} and Theorem \ref{main7}, we follow Thorne's strategy in \cite{Th:cyclo}.
Recall that he first uses his modularity results \cite[Theorem 7.6]{Th:irred} for elliptic curves whose mod 5 representations are irreducible, and then he reduces the problem to the study of $\Q_\infty-$rational points on the modular curves $X(s3,b5)$ and $X(b3,b5)=X_0(15)$; see Section \ref{mainproof} for the notation. 
Then he uses Iwasawa theory of elliptic curves.
In our proof of Theorem \ref{main5}, we reduce the problem to the study of $\Q_\infty$-rational points on a quadratic twist of $X_0(15)$, and we apply the method of Thorne to the quadratic twist of $X_0(15)$. 
In our proof of Theorem \ref{main7},  we instead use the modularity results by the author \cite[Theorem 1.5]{Y:ab} for elliptic curves whose mod $7$ representations are irreducible, and then we reduce the problem to the study of $\Q_\infty$-rational points on  quadratic twists of  $X_0(21)$. 
We then apply the method of Thorne to the quadratic twists of $X_0(21)$.
The details are given in Section \ref{reduction} and Section \ref{mainproof}.



\section{Results from Iwasawa theory of elliptic curves}
In this section, we collect some facts about Iwasawa theory of elliptic curves for the readers' convenience. The first two results concern elliptic curves with good ordinary reduction at a given prime.

The following theorem follows from the results of Kato.

\begin{thm}[Kato]
\label{Theorem:Kato}
Let $p$ be an odd prime and $E$ be an elliptic curve over $\Q$ with conductor $N_E$.
Assume that the following conditions are satisfied:
\begin{itemize}
\item $E$ has good ordinary reduction at $p$.
\item The mod $p$ representation $\overline{\rho}_{E,p}\colon \Gal(\overline{\Q}/\Q)\to \GL_2(\F_p)$ is surjective.
\item There exists a prime $q\neq p$ such that $q||N_E$ and  $\overline{\rho}_{E,p}$ is ramified at $q$.
\item $L(E, 1)\neq 0$.
\end{itemize}
Then the $p$-part of the Tate--Shafarevich group $\Sha(E/\Q)_p$
is finite, and
$L(E,1)/\Omega_E$ and $\#(\Sha(E/\Q)_p) \prod_{l|N_E} c_l(E)$
have the same $p$-adic vauation, where $c_l(E)$ denotes the Tamagawa factor of $E$ at $l$.
\end{thm}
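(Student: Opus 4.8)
The plan is to deduce this statement from Kato's Euler system theory together with the $p$-adic analogue of the Birch–Swinnerton-Dyer formula in the analytic rank zero case. First I would invoke Kato's main theorem: under the hypothesis that $\overline{\rho}_{E,p}$ has large image (surjectivity is more than enough), Kato's Euler system produces a divisibility in the Iwasawa main conjecture, namely that the characteristic ideal of the relevant Selmer group divides the $p$-adic $L$-function $L_p(E)$ over the cyclotomic $\Z_p$-extension $\Q_\infty/\Q$. Specializing this divisibility at the trivial character — which is legitimate because $L(E,1)\neq 0$ guarantees the $p$-adic $L$-function does not vanish at the relevant point, so the Selmer group is finite — gives that $\Sha(E/\Q)_p$ is finite and that the $p$-adic valuation of $\#\Sha(E/\Q)_p\cdot\prod_{l\mid N_E}c_l(E)$ is \emph{at most} that of $L(E,1)/\Omega_E$ (up to the unit contributions from torsion and the local factor at $p$, which are controlled by the extra ramification hypothesis at $q$).

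For the reverse inequality I would use the control theorem for Selmer groups (Mazur) to descend from $\Q_\infty$ to $\Q$, and then Perrin-Riou's / Schneider's $p$-adic height and Poitou–Tate arguments, or more directly the now-standard packaging of Kato's bound: the interpolation formula for $L_p(E)$ at the trivial character reads (up to a $p$-adic unit, using good ordinary reduction to pin down the modified Euler factor at $p$)
\[
L_p(E)(\text{triv}) \;\doteq\; \frac{L(E,1)}{\Omega_E},
\]
while the algebraic side of the main conjecture divisibility evaluates to $\#\Sha(E/\Q)_p\cdot\prod_{l\mid N_E}c_l(E)$ up to a $p$-adic unit (the torsion contributions $\#E(\Q)_{\mathrm{tors}}^2$ and the local term at $p$ are units precisely because $\overline{\rho}_{E,p}$ is surjective, forcing $E(\Q)[p]=0$, and because $q\| N_E$ with $\overline{\rho}_{E,p}$ ramified at $q$ forces $p\nmid c_q(E)$ contributions to behave and the reduction at $p$ to be of the right type). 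Combining the two divisibilities from Kato with this dictionary yields equality of $p$-adic valuations.

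The main obstacle is bookkeeping the unit factors: one must check carefully that all the auxiliary quantities appearing in the $p$-adic BSD formula — the order of the torsion subgroup, the cardinality of $\widetilde{E}(\F_p)[p^\infty]$, and the ratio of real to $p$-adic periods — contribute trivial $p$-adic valuation under the stated hypotheses. The surjectivity of $\overline{\rho}_{E,p}$ handles the torsion (so $p\nmid \#E(\Q)_{\mathrm{tors}}$), and good ordinary reduction at $p$ together with surjectivity handles the local factor at $p$ (the unit root is a $p$-adic unit and $\widetilde{E}(\F_p)$ has no $p$-torsion). The ramification condition at $q$ is exactly what Kato needs to rule out the exceptional zero / trivial-zero phenomena and to make the Euler system argument produce a clean divisibility without extra denominators. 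Once these are in place the conclusion is a formal consequence; I would cite Kato's paper for the Euler system input and the standard references (Mazur–Tate, Perrin-Riou, Schneider) for the $p$-adic BSD dictionary rather than reproving anything.
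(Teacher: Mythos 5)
The paper does not reprove this statement at all: it is quoted directly from the literature, with \cite[Theorem 17.4]{Ka} together with \cite[Theorem 3.35]{SU} given as the reference. Your sketch of one half of the argument is fine: Kato's Euler system divisibility in the cyclotomic main conjecture, specialized at the trivial character (legitimate since $L(E,1)\neq 0$), does give finiteness of $\Sha(E/\Q)_p$ and the upper bound, i.e.\ that the $p$-adic valuation of $\#\Sha(E/\Q)_p\prod_{l\mid N_E}c_l(E)$ is at most that of $L(E,1)/\Omega_E$, with the unit bookkeeping (torsion, $\#\widetilde{E}(\F_p)$, period ratios) handled by surjectivity of $\overline{\rho}_{E,p}$ and good ordinary reduction exactly as you say.

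The genuine gap is your treatment of the reverse inequality. Mazur's control theorem, Perrin-Riou/Schneider $p$-adic heights, and Poitou--Tate duality do not produce the opposite divisibility: they only compare Selmer groups over $\Q$ and $\Q_\infty$ and cannot bound the Selmer group from below by the $L$-value. What is actually needed is the other half of the Iwasawa main conjecture (the $p$-adic $L$-function divides the characteristic ideal), which is the Skinner--Urban theorem proved via Eisenstein congruences on $\mathrm{U}(2,2)$; this is precisely why the paper also cites \cite[Theorem 3.35]{SU}, where the equality of valuations is packaged. Relatedly, you misidentify the role of the hypothesis that some $q\neq p$ satisfies $q\| N_E$ with $\overline{\rho}_{E,p}$ ramified at $q$: it has nothing to do with exceptional/trivial zeros (those arise only for split multiplicative reduction at $p$, excluded here since $E$ is good ordinary at $p$); it is exactly the running hypothesis in Skinner--Urban's main conjecture result. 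So your proposal as written would only establish one inequality of valuations, not the stated equality, unless you add the Skinner--Urban input explicitly.
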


\begin{proof}
See \cite[Theorem 17.4]{Ka}. See also  \cite[Theorem 3.35]{SU} for a convenient reference.
\end{proof}

\begin{rem}
Just after \cite[Theorem 3.35]{SU}, it is remarked that the hypotheses of Theorem \ref{Theorem:Kato} are satisfied if $E$ is semistable and $p\geq 11$ is a prime of good ordinary reduction.
\end{rem}
We also need the following theorem, which is specialized to the case where the base field is $\Q$.

\begin{thm}[Greenberg]
\label{Theorem:Greenberg}
Let $E$ be an elliptic curve over $\Q$ with conductor $N_E$.
Assume that $E$ has good ordinary reduction at a prime $p$.
Assume also that $\mathrm{Sel}(E/\Q)_p$ is finite.
Let $f_E(T)$ be a generator of the characteristic ideal of $\mathrm{Hom}(\mathrm{Sel}(E/\Q_\infty)_p, \Q_p/\Z_p)$. 
Then $E(\Q)$ is finite, and $f_E(0)$ and the value
\[ \left(\prod_{l  l \mid N_E}c_{l}(E)\right)(\#\widetilde{E}_p(\F_p))^2\cdot  \# \mathrm{Sel}(E/\Q)_p/(\#E(\Q))^2\]
have the same $p$-adic valuation.
Here, $\widetilde{E}_p$ denotes the mod $p$ reduction of $E$.
\end{thm}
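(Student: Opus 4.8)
The plan is to derive this from Mazur's control theorem combined with the structure theory of finitely generated torsion $\Lambda$-modules; this is essentially a theorem of Greenberg, and I will sketch the argument. Write $\Gamma=\Gal(\Q_\infty/\Q)$, fix a topological generator $\gamma$, and identify $\Lambda:=\Z_p[[\Gamma]]$ with $\Z_p[[T]]$ via $\gamma\mapsto 1+T$. Put $X_E:=\mathrm{Hom}(\mathrm{Sel}(E/\Q_\infty)_p,\Q_p/\Z_p)$, a finitely generated $\Lambda$-module. By Pontryagin duality, $X_E/TX_E$ is dual to the $\Gamma$-invariants $\mathrm{Sel}(E/\Q_\infty)_p^{\Gamma}$, and $X_E[T]$ is dual to the $\Gamma$-coinvariants $\mathrm{Sel}(E/\Q_\infty)_{p,\Gamma}$.

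The first ingredient is purely algebraic: for a finitely generated torsion $\Lambda$-module $X$ with characteristic power series $g_X$, one has $g_X(0)\neq 0$ if and only if $X/TX$ (equivalently $X[T]$) is finite, and in that case, choosing a pseudo-isomorphism $X\sim\bigoplus_i\Lambda/(g_i)$ with $g_X=\prod_i g_i$ and using that $\mathrm{length}_{\Z_p}(Z/TZ)=\mathrm{length}_{\Z_p}(Z[T])$ for every finite $\Lambda$-module $Z$, one obtains $v_p(g_X(0))=\mathrm{length}_{\Z_p}(X/TX)-\mathrm{length}_{\Z_p}(X[T])$, where $v_p$ denotes the $p$-adic valuation.

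The second ingredient is the control theorem. In Greenberg's sharp form, the restriction map $\mathrm{Sel}(E/\Q)_p\to\mathrm{Sel}(E/\Q_\infty)_p^{\Gamma}$ has finite kernel and cokernel, whose orders are described by cohomology of $\Gamma$ acting on the local points and on $E(\Q_\infty)_p$ (the latter group being finite). Since $\mathrm{Sel}(E/\Q)_p$ is assumed finite, $\mathrm{Sel}(E/\Q_\infty)_p^{\Gamma}$ is finite; hence $X_E$ is $\Lambda$-torsion with $f_E(0)\neq 0$, and in particular $E(\Q)\otimes\Q_p/\Z_p=0$, so $E(\Q)$ is finite. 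The same sequence expresses $v_p(\#\mathrm{Sel}(E/\Q_\infty)_p^{\Gamma})$ as $v_p(\#\mathrm{Sel}(E/\Q)_p)$ plus a sum of local contributions: for $l\mid N_E$ one picks up $v_p(c_l(E))$ (for a place $w\mid l$ the extension $\Q_{\infty,w}/\Q_l$ is unramified when $l\neq p$, so the mismatch of local conditions is governed by the component group of $E$ at $l$); the prime $p$ contributes $v_p(\#\widetilde{E}_p(\F_p))$ through the good ordinary local condition; and the archimedean place contributes nothing. A parallel dual computation evaluates $v_p(\#\mathrm{Sel}(E/\Q_\infty)_{p,\Gamma})$, using that the invariants and coinvariants of a finite module under the procyclic group $\Gamma$ have equal order and that $H^1(\Gamma,E(\Q_\infty)_p)\cong E(\Q)_p$. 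Plugging these into the algebraic identity of the previous paragraph, namely $v_p(f_E(0))=v_p(\#\mathrm{Sel}(E/\Q_\infty)_p^{\Gamma})-v_p(\#\mathrm{Sel}(E/\Q_\infty)_{p,\Gamma})$, and collecting the local terms yields precisely the stated value.

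I expect the main obstacle to be the bookkeeping in the last step: matching each local cohomology group appearing in the control sequence, and in its dual counterpart, with the correct power of a Tamagawa factor $c_l(E)$, of $\#\widetilde{E}_p(\F_p)$, or of $\#E(\Q)_p$, and verifying that the contributions from the invariants and the coinvariants sides combine with the right signs and exponents to produce the squares in the final formula. The algebraic lemma of the second paragraph and the bootstrapping of finiteness and $\Lambda$-torsion in the third are standard, as is the finiteness of $E(\Q_\infty)_p$.
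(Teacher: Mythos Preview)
Your sketch is essentially correct and follows Greenberg's own argument in \cite[Theorem~4.1]{Gr}; the paper does not prove this statement at all but simply cites that reference. In other words, there is no independent proof in the paper to compare against: what you have outlined (the algebraic lemma $v_p(f_E(0))=\mathrm{length}(X_E/TX_E)-\mathrm{length}(X_E[T])$ for torsion $\Lambda$-modules, combined with Mazur's control theorem in the sharpened form where the kernel and cokernel of restriction are computed via local cohomology at $l\mid N_E$, at $p$, and via $H^1(\Gamma,E(\Q_\infty)_p)\cong E(\Q)_p$) is precisely the route taken in Greenberg's lecture notes, and your identification of the bookkeeping of local terms as the delicate point is accurate.
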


\begin{proof}
See \cite[Theorem 4.1]{Gr}.
\end{proof}

Combining the results of Kato and Greenberg,
we have the following corollary.

\begin{cor}
\label{ordfin}
Let $p$ be an odd prime and $E$ be an elliptic curve over $\Q$ with conductor $N_E$.
Assume that the following conditions are satisfied:
\begin{enumerate}
\item $E$ has good ordinary reduction at $p$.
\item The mod $p$ representation $\overline{\rho}_{E,p}\colon \Gal(\overline{\Q}/\Q)\to \GL_2(\F_p)$ is surjective.
\item There exists a prime $q\neq p$ such that $q||N_E$ and and $\overline{\rho}_{E,p}$ is ramified at $q$.
\item $L(E, 1)/\Omega_E$ is a $p$-adic unit.
\item $p$ does not divide $\#\widetilde{E}_p(\F_p)$.
\item $p$ does not divide $\#E(\Q)$.
\end{enumerate}
Then $E(\Q_\infty)$ is a finite group.
\end{cor}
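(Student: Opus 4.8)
The plan is to combine Theorems \ref{Theorem:Kato} and \ref{Theorem:Greenberg} to show that the characteristic ideal generator $f_E(T)$ of $\mathrm{Hom}(\mathrm{Sel}(E/\Q_\infty)_p, \Q_p/\Z_p)$ is a $p$-adic unit at $T=0$, and then to upgrade this numerical non-vanishing into the finiteness of $E(\Q_\infty)$ via a standard Iwasawa-theoretic control argument. First, I would observe that hypotheses (1)--(4), together with the elementary fact that $L(E,1)/\Omega_E$ being a $p$-adic unit forces $L(E,1) \neq 0$, put us in the situation of Theorem \ref{Theorem:Kato}. That theorem then tells us that $\Sha(E/\Q)_p$ is finite and that its order, multiplied by $\prod_{l \mid N_E} c_l(E)$, has the same $p$-adic valuation as $L(E,1)/\Omega_E$, which is zero by hypothesis (4). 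Hence $\Sha(E/\Q)_p$ is trivial \emph{and} every Tamagawa factor $c_l(E)$ is prime to $p$.

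Next, since $L(E,1) \neq 0$ and $\Sha(E/\Q)_p$ is finite, the $p$-primary part of the Birch--Swinnerton-Dyer exact sequence (or more directly the Kato bound) gives that $E(\Q)$ is finite and that $\mathrm{Sel}(E/\Q)_p$ is finite; in particular we may invoke Theorem \ref{Theorem:Greenberg}. I would then feed into the displayed $p$-adic valuation identity of Theorem \ref{Theorem:Greenberg} the three pieces of information now in hand: $\prod_{l \mid N_E} c_l(E)$ is a $p$-adic unit (from the previous paragraph), $\#\widetilde{E}_p(\F_p)$ is prime to $p$ (hypothesis (5)), and $\#E(\Q)$ is prime to $p$ (hypothesis (6)), so that $(\#E(\Q))^2$ contributes trivially to the valuation. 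It remains to note that $\#\mathrm{Sel}(E/\Q)_p$ is also a $p$-adic unit: indeed the sequence $0 \to E(\Q)\otimes\Q_p/\Z_p \to \mathrm{Sel}(E/\Q)_p \to \Sha(E/\Q)_p \to 0$ has trivial outer terms (the first because $E(\Q)$ is finite, the second because $\Sha(E/\Q)_p = 0$), so $\mathrm{Sel}(E/\Q)_p = 0$. Therefore the right-hand side of Greenberg's identity has $p$-adic valuation $0$, whence $v_p(f_E(0)) = 0$, i.e. $f_E(0)$ is a $p$-adic unit.

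Finally, I would conclude that $\mathrm{Sel}(E/\Q_\infty)_p$ is finite: writing $\Lambda = \Z_p[[T]]$, the $\Lambda$-module $X := \mathrm{Hom}(\mathrm{Sel}(E/\Q_\infty)_p, \Q_p/\Z_p)$ has characteristic ideal $(f_E(T))$ with $f_E(0)$ a unit, so $f_E(T)$ is coprime to $T$; combined with the fact (part of Greenberg's control theorem, already used implicitly) that $X$ has no nonzero finite $\Lambda$-submodule, a Nakayama-type argument shows $X/TX$ is finite and hence $X$ itself is finite, so $\mathrm{Sel}(E/\Q_\infty)_p$ is finite. Since $E(\Q_\infty) \otimes \Q_p/\Z_p$ injects into $\mathrm{Sel}(E/\Q_\infty)_p$, the group $E(\Q_\infty)\otimes\Q_p/\Z_p$ is finite, forcing the $p$-corank of $E(\Q_\infty)$ to be $0$; together with the finiteness of the torsion of a finitely generated abelian group this yields that $E(\Q_\infty)$ is finite. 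The main obstacle is making the last implication—from ``$f_E(0)$ is a unit'' to ``$E(\Q_\infty)$ is finite''—fully rigorous, since it requires the control theorem for the Selmer group over $\Q_\infty$ and the no-finite-submodule property; but both are standard in this setting and are exactly what Theorem \ref{Theorem:Greenberg} and its proof in \cite{Gr} supply, so no genuinely new input is needed.
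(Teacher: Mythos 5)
Your argument is correct and follows essentially the same route as the paper: apply Theorem \ref{Theorem:Kato} to make $\Sha(E/\Q)_p$ and the Tamagawa product $p$-adic units, deduce via the Selmer exact sequence that $\#\mathrm{Sel}(E/\Q)_p$ is a unit, feed hypotheses (5) and (6) into Theorem \ref{Theorem:Greenberg} to get $v_p(f_E(0))=0$, and conclude finiteness of the dual Selmer group over $\Q_\infty$ and hence of $E(\Q_\infty)$. The only quibble is your closing phrase about ``the torsion of a finitely generated abelian group'': $E(\Q_\infty)$ is not a priori finitely generated, so the finiteness of its torsion requires Imai's theorem (cited in the paper for the supersingular case) rather than Mordell--Weil, a point the paper's own proof also leaves implicit.
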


\begin{proof}
Note that by \cite[Corollary 14.3]{Ka} the condition (4) implies the finiteness of $E(\Q)$, and hence the condition (6) makes sense.
By Theorem \ref{Theorem:Kato},
we see that $\#(\Sha(E/\Q)_p)$ and $\prod_{l|N_E} c_l(E)$ are $p$-adic units.
From the short exact sequence
\[0\to E(\Q)\otimes_\Z \Q_p/\Z_p\to \mathrm{Sel}(E/\Q)_p\to \Sha(E/\Q)_p\to 0. \]
we see that
$\# \mathrm{Sel}(E/\Q)_p$ is a $p$-adic unit.
By Theorem \ref{Theorem:Greenberg},
we see that $f_E(0)$ is a $p$-adic unit.
Therefore the $\mu$-invariant and the $\lambda$-invariant of
$\mathrm{Hom}(\mathrm{Sel}(E/\Q_\infty)_p, \Q_p/\Z_p)$
are both zero.
We conclude that $E(\Q_\infty)$ is a finite group.
\end{proof}


Next we need the following result of Kurihara, which deals with elliptic curves with good supersingular reduction at a given prime.
\begin{thm}[Kurihara]
\label{Ku} Let $p$ be an odd prime number and $E$ be an elliptic curve over $\Q$ with conductor $N_E$.
Assume that the following conditions are satisfied:
\begin{enumerate}
\item $E$ has good supersingular reduction at $p$.
\item The mod $p$ representation $\overline{\rho}_{E,p}\colon\Gal(\overline{\Q}/\Q) \to \GL_2(\F_p)$ is surjective.  
\item $L(E,1)/\Omega_E$ is a $p$-adic unit.
\item $p$ does not divide $\prod_{l \mid N_E}c_l(E).$
\end{enumerate}
Then $E(\Q_\infty)$ is a finite group.
where $\Q_n$ denotes the unique subextension of $\Q_\infty/\Q$ with $[\Q_n:\Q]=p^n$.
\end{thm}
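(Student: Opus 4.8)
The plan is to run the supersingular counterpart of the proof of Corollary~\ref{ordfin}, replacing the ordinary Iwasawa theory of Kato and Greenberg by the signed ($\pm$) Iwasawa theory of Kobayashi together with Pollack's $p$-adic $L$-functions $L_p^{\pm}(E,T)$; the substance is already contained in Kurihara's original work. Write $\Gamma=\Gal(\Q_\infty/\Q)$ and $\Lambda=\Z_p[[\Gamma]]\cong\Z_p[[T]]$, a regular local ring of Krull dimension $2$ with maximal ideal $(p,T)$. Since $E$ has supersingular reduction at the odd prime $p$ we have $p\mid a_p(E)$; assuming, as we may in the cases relevant to Theorems~\ref{main5} and \ref{main7} (where $p\geq 5$), that $a_p(E)=0$, the signed local conditions at $p$ and the power series $L_p^{\pm}(E,T)\in\Lambda$ are defined. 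Observe also that surjectivity of $\overline{\rho}_{E,p}$ rules out a $\Q$-rational point of order $p$, so $p\nmid\#E(\Q)_{\mathrm{tors}}$, while $\#\widetilde{E}_p(\F_p)=p+1$ is automatically prime to $p$; this explains why Theorem~\ref{Ku} needs no analogues of hypotheses (5) and (6) of Corollary~\ref{ordfin}.

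First I would note that the hypothesis that $L(E,1)/\Omega_E$ is a $p$-adic unit forces $L(E,1)\neq 0$, so by Kato's Euler system \cite{Ka} the groups $E(\Q)$ and $\Sha(E/\Q)[p^\infty]$ are finite and the signed Selmer groups $\Sel^{\pm}(E/\Q_\infty)_p$ are $\Lambda$-cotorsion. Next I would invoke Kato's divisibility in the signed main conjecture,
\[
\mathrm{char}_\Lambda\big(\Sel^{\pm}(E/\Q_\infty)_p^{\vee}\big)\ \big|\ \big(L_p^{\pm}(E,T)\big)\qquad\text{in }\Lambda,
\]
where $(-)^{\vee}$ denotes the Pontryagin dual; surjectivity of $\overline{\rho}_{E,p}$ is precisely what makes this divisibility integral, with no spurious power of $p$. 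I would then feed in the interpolation formula for $L_p^{\pm}(E,T)$ at the trivial character: it expresses $L_p^{\pm}(E,0)$ as $L(E,1)/\Omega_E$ times a correction factor assembled from the Euler factor of $E$ at $p$ (a $p$-adic unit because $a_p(E)=0$) and from the local Tamagawa numbers (a $p$-adic unit exactly by hypothesis (4)). Hence $L_p^{\pm}(E,0)$ is a $p$-adic unit, so $L_p^{\pm}(E,T)\notin(p,T)$ and is a unit of $\Lambda$; therefore $\mathrm{char}_\Lambda(\Sel^{\pm}(E/\Q_\infty)_p^{\vee})=(1)$, i.e.\ $\Sel^{\pm}(E/\Q_\infty)_p^{\vee}$ is pseudo-null over $\Lambda$ and hence finite (indeed trivial under our hypotheses).

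To conclude I would apply Kobayashi's control theorem, which realizes $\Sel(E/\Q_n)_p$ as a subquotient of the $\Gamma_n$-(co)invariants of $\Sel^{\pm}(E/\Q_\infty)_p$ up to finite error terms; triviality of the latter makes $\Sel(E/\Q_n)_p$ finite for every $n$, so $\mathrm{rank}\,E(\Q_n)=0$ for all $n$ and $E(\Q_\infty)$ is a torsion group. It is finite because its torsion is finite: the prime-to-$p$ part by Imai's theorem on torsion points in cyclotomic $\Z_p$-extensions, and the $p$-part because $\Q_\infty/\Q$ is pro-$p$ while $\GL_2(\F_p)$ has no nontrivial $p$-group quotient (its maximal abelian quotient is $\F_p^{\times}$), so surjectivity of $\overline{\rho}_{E,p}$ forces $\Q(E[p])\cap\Q_\infty=\Q$ and $E(\Q_\infty)[p]=0$. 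Hence $E(\Q_\infty)$ is finite.

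I expect the third step to be the crux: one has to track the precise normalizations in the constructions of Pollack and Kobayashi to be certain that hypotheses (3) and (4) together genuinely force $L_p^{\pm}(E,T)$ to be a unit of $\Lambda$, and one must confirm that Kato's divisibility is integral under (2). A cleaner route for the write-up --- presumably the reason Kurihara's name appears in the statement --- is simply to quote Kurihara's theorem directly: under exactly hypotheses (1)--(4) it already yields $\Sha(E/\Q_n)[p^\infty]=0$ and $\mathrm{rank}\,E(\Q_n)=0$ for every $n$, whence $E(\Q_\infty)$ is finite by the torsion argument above.
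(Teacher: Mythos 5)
Your proposal is correct in substance, but it is organized differently from the paper, whose entire proof is a two-line citation: Kurihara's Theorem 0.1(2) in \cite{Ku} is invoked verbatim under hypotheses (1)--(4) to conclude that $E(\Q_\infty)$ is a torsion group, and finiteness then follows from Imai \cite{Im}. Your closing ``cleaner route'' is thus exactly the paper's argument, while the bulk of your write-up reconstructs the machinery behind Kurihara's theorem (signed Selmer groups, Pollack's $L_p^{\pm}$, Kato's divisibility transferred to the signed setting, Kobayashi's control theorem); that reconstruction is the right skeleton, and it buys an explanation of \emph{why} hypotheses (2)--(4) appear, which the paper does not attempt. Two caveats on the reconstruction: first, the Tamagawa numbers do not occur in the interpolation formula for $L_p^{\pm}(E,T)$ at the trivial character --- hypothesis (3) alone makes $L_p^{\pm}(E,0)$ a $p$-adic unit up to an explicit Euler-type factor, and hypothesis (4) enters elsewhere (it is among Kurihara's hypotheses and is used in controlling local terms at bad primes when descending to $\Q_n$), so as written that step conflates two different inputs; second, your reduction to $a_p(E)=0$ covers only $p\geq 5$, whereas the statement allows $p=3$ --- harmless for the applications in Theorems \ref{main5} and \ref{main7}, but strictly speaking your sketch does not reprove the stated theorem for $p=3$, whereas the citation route does whatever Kurihara's hypotheses permit. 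Finally, your separate treatment of the $p$-primary torsion via surjectivity of $\overline{\rho}_{E,p}$ is correct but unnecessary: since $E$ has good reduction at $p$, Imai's theorem already bounds the full torsion subgroup of $E(\Q(\mu_{p^\infty}))$, which is how the paper concludes.
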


\begin{proof}
We see from \cite[Theorem 0.1 (2)]{Ku} that $E(\Q_\infty)$ is a torsion group, and it is in fact finite by \cite{Im}. 
\end{proof}


\section{The reduction step}
\label{reduction}
In this section, we reduce the proof of Theorem \ref{main5} and Theorem \ref{main7} to the study of $F_\infty$-rational points on $X_0(15)$ and $X_0(21)$, where $F$ is a real quadratic field.

Let $q=5$ or $7$.
We consider two modular curves $X(s3, bq)$ and $X(b3,bq)$; for the notation on modular curves, see  \cite[\S 2.2]{FLS}.
Roughly speaking, $X(s3,bq)$ is the modular curve which classifies elliptic curves $E$ such that the mod 3 representations $\overline{\rho}_{E,3}$ are irreducible and that the mod $q$ representations $\overline{\rho}_{E,q}$ are reducible.
Also, $X(b3,bq)$ is the modular curve  which classifies elliptic curves $E$ such that both $\overline{\rho}_{E,3}$ and $\overline{\rho}_{E,q}$ are reducible.
Note that $X(b3,bq)$ is isomorphic to the modular curve $X_0(3q)$ corresponding to the congruence subgroup $\Gamma_0(3q)$.
\begin{lem}
\label{mc}
The following hold.
\begin{enumerate}
    \item The modular curve $X_0(15)$ (resp.\ $X(s3,b5$)) is isomorphic over $\Q$ to the elliptic curve with Cremona label 15A1 (resp.\ 15A3).
    \item There is an isogeny $X(s3,b5)\to X_0(15)$ of degree 2 over $\Q$.
    \item The modular curve $X_0(21)$ is isomorphic over $\Q$ to the elliptic curve with Cremona label 21A1.
    \item There is a morphism $X(s3,b7)\to X_0(21)$ of degree 2 over $\Q$.
    \item The $j$-invariant of $X_0(15)$ (resp. $X_0(21)$) is $3^{-4}\cdot 5^{-4}\cdot 13^3\cdot 37^3$ (resp. $3^{-4}\cdot 7^{-2}\cdot 193^3$).
\end{enumerate}
\end{lem}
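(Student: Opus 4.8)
The plan is to verify the five items by explicit computation with the relevant small-level modular curves, using three standard inputs: the genus formula for $X_0(N)$ together with Riemann--Hurwitz; the classical isomorphism between $X_0(9)$ and the modular curve $X_{\mathrm{split}}(3)$ attached to the full (non-normalized) split Cartan subgroup modulo $3$; and the Eichler--Shimura construction together with the tables of elliptic curves over $\Q$ (Cremona's tables, the LMFDB) recording conductors, isogeny classes and $j$-invariants. Since explicit equations for these curves are recorded in \cite{FLS}, an equally good route is to read everything off from there.

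For (1) and (3) I would first compute that $X_0(15)$ and $X_0(21)$ both have genus $1$ (for $N=15$: index $24$, no elliptic points, $4$ cusps, so genus $1+2-2=1$; for $N=21$: index $32$, $\nu_2=0$, $\nu_3=2$, $4$ cusps, so genus $1+\tfrac{8}{3}-\tfrac{2}{3}-2=1$). Each contains the rational cusp, hence is an elliptic curve over $\Q$. To pin down the isomorphism class I would use Eichler--Shimura: a genus-one $X_0(N)$ coincides with its Jacobian $J_0(N)$, which is the optimal curve in the isogeny class of conductor $N$, namely $15\mathrm{A}1$, resp.\ $21\mathrm{A}1$, in Cremona's labelling; equivalently one takes the Weierstrass model from \cite{Cr} and recognises it directly. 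Item (5) then amounts to computing the $j$-invariants of $15\mathrm{A}1$ and $21\mathrm{A}1$: the stated factorisations $3^{-4}\cdot 5^{-4}\cdot 13^3\cdot 37^3$ and $3^{-4}\cdot 7^{-2}\cdot 193^3$ have negative exponents exactly at the primes of (multiplicative) bad reduction, $\{3,5\}$ resp.\ $\{3,7\}$, which is the expected shape, and one checks the precise values against the tables.

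For (2) and (4), the curve $X(s3,bq)$ classifies elliptic curves whose mod-$3$ image lies in the normalizer of the split Cartan and which carry a cyclic $q$-isogeny. Via $X_0(9)\cong X_{\mathrm{split}}(3)$ one has $X(s3)\cong X_0(9)/\langle w_9\rangle$, and adjoining $\Gamma_0(q)$-structure gives $X(s3,bq)\cong X_0(9q)/\langle w_9\rangle$; a count of cusps and elliptic points (or Riemann--Hurwitz for $X_0(9q)\to X_0(9q)/\langle w_9\rangle$, with the fixed points of $w_9$ given by the usual class-number formula) determines the genus. For $q=5$ one finds genus $1$; this curve has a rational point and good reduction away from $15$, so its Jacobian lies in the isogeny class $15\mathrm{A}$, and comparing $j$-invariants identifies it as $15\mathrm{A}3$. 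The degree-$2$ isogeny $X(s3,b5)\to X_0(15)$ is then the edge of the isogeny graph of class $15\mathrm{A}$ joining $15\mathrm{A}3$ to $15\mathrm{A}1$ (a finite check, realised as the quotient by a suitable rational subgroup of order $2$). For $q=7$ I would run the same construction on $X_0(63)/\langle w_9\rangle$ and read off both $X(s3,b7)$ and its degree-$2$ morphism to $X_0(21)=21\mathrm{A}1$ from the resulting model.

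I expect the main obstacle to be precisely this explicit work with the Cartan-level curves $X(s3,bq)$: producing a correct model (the normalized fibre product and the $w_9$-quotient), computing its genus and its rational points, and pinning it down --- together with the degree-$2$ map to $X_0(3q)$ --- inside the right isogeny class; in particular one must settle the genus of $X(s3,b7)$ and the exact nature of the map (isogeny versus mere morphism). Everything concerning $X_0(15)$, $X_0(21)$ and the $j$-invariants is, by comparison, routine bookkeeping against the standard tables.
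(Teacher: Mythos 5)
Your proposal is correct and follows essentially the same route as the paper, whose proof simply cites \cite{FLS} (Lemmas 5.6, 5.7 and the proof of Lemma 1.1) and the LMFDB for exactly the identifications you propose to recompute: $X(s3,b5)\cong X_0(45)/\langle w_9\rangle$ is the elliptic curve 15A3 admitting a degree-2 isogeny to $X_0(15)=15\mathrm{A}1$, while $X(s3,b7)\cong X_0(63)/\langle w_9\rangle$ (genus 3) maps with degree 2, via the $w_7$-quotient, onto $X_0(63)/\langle w_9,w_7\rangle\cong 21\mathrm{A}1\cong X_0(21)$. The one step I would tighten is the inference ``rational point and good reduction away from $15$, hence the Jacobian lies in class 15A'' --- this alone does not exclude conductor $45$ or $225$ --- so identify the quotient instead through the newform decomposition of $J_0(45)$ under $w_9$ (or the explicit model), as is implicitly done in \cite{FLS}.
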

\begin{proof}
 For (1), see \cite[Lemma 5.6, Lemma 5.7]{FLS}. The part (2) is checked at LMFDB \cite{L}.
 For (3) and (4), see the proof of \cite[Lemma 1.1]{FLS}:
 Note that the modular curve $X(s3,b7)$ is isomorphic to the Atkin--Lehner quotient $X_0(63)/\langle w_9 \rangle$ of genus 3, and its quotient $X_0(63)/\langle w_9,w_7\rangle$ is the elliptic curve with Cremona label 21A1, which is isomorphic to $X_0(21)$.
 The part (4) can be checked at LMFDB.
\end{proof}
\begin{lem}
\label{points}
Let $X, X'$ be two smooth projective curves over a field $F$ with a finite morphism $f\colon X'\to X$ of degree $2$. 
Let $F'/F$ be a finite extension of fields of odd degree.
If $X(F')=X(F)$, then we have $X'(F')=X'(F)$.
\end{lem}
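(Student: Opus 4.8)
The plan is to analyze the possible fibers of $f$ over points of $X(F')$ and use the degree hypothesis together with the parity of $[F':F]$. First I would take any point $P' \in X'(F')$ and set $P = f(P') \in X(F')$. By the hypothesis $X(F') = X(F)$, the point $P$ is actually $F$-rational. Now I consider the scheme-theoretic fiber $f^{-1}(P)$, which is a closed subscheme of $X'$ defined over $F$; since $\deg f = 2$, this fiber, viewed as an $F$-scheme, has length $2$ (as $f$ is finite flat of degree $2$, $X'$ and $X$ being smooth curves). The fiber $f^{-1}(P)$ therefore has one of the following shapes over $\overline{F}$: two distinct $F$-rational points, one $F$-rational point with multiplicity $2$ (ramification), or a single closed point of degree $2$ whose residue field is a quadratic extension $K/F$.

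The key observation is that $P'$ is a point of $f^{-1}(P)$ defined over $F'$, and I want to conclude it is defined over $F$. In the first two cases the geometric points of the fiber are already $F$-rational, so in particular $P' \in X'(F)$ and we are done. The only remaining case is when $f^{-1}(P) = \operatorname{Spec} K$ for a quadratic field extension $K/F$. In that case any $F'$-point of this fiber corresponds to an $F$-algebra homomorphism $K \to F'$, which forces $K$ to embed into $F'$, hence $2 = [K:F]$ divides $[F':F]$. But $[F':F]$ is odd by hypothesis, a contradiction. Therefore this case cannot occur, and in all surviving cases $P' \in X'(F)$; since $P'$ was an arbitrary element of $X'(F')$ and the reverse inclusion $X'(F) \subseteq X'(F')$ is trivial, we conclude $X'(F') = X'(F)$.

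I do not expect a serious obstacle here; the argument is essentially a degree/parity count on fibers. The one point that needs a little care is justifying that $f$ is finite flat of degree $2$ so that every fiber has length exactly $2$ as an $F$-scheme: this is standard because $f$ is a finite morphism between smooth (hence regular, in particular Cohen--Macaulay) curves, so it is automatically flat, and the length of the fiber is constant equal to $\deg f = 2$. Once that is in hand, the enumeration of fiber types and the embedding-of-quadratic-field argument finish the proof cleanly. An alternative phrasing avoiding flatness entirely is to work with the generic behavior and Galois theory: the residue field $F(P')$ of $P'$ satisfies $[F(P'):F] \le \deg f = 2$ because $P'$ lies over an $F$-point $P$ (one can bound the degree of a point in a fiber by the degree of the map), and $F(P') \subseteq F'$ gives $[F(P'):F] \mid [F':F]$, which is odd, forcing $[F(P'):F] = 1$; I would likely present this cleaner version.
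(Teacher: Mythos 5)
Your proof is correct and follows essentially the same route as the paper: the paper also maps $P'$ down to an $F$-rational point, notes that $P'$ is then defined over an extension $F''/F$ of degree at most $2$, and uses oddness of $[F':F]$ to force $F'\cap F''=F$, which is exactly your residue-field divisibility argument in the "alternative phrasing." Your fiber-length discussion via flatness is a slightly more detailed justification of the step the paper states briefly ("since $f$ is of degree 2, $P$ is in $X'(F'')$ with $[F'':F]\leq 2$"), but the underlying idea is identical.
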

\begin{proof}
Since the inclusion $X'(F)\subset X'(F')$ is obvious, it is enough to prove the opposite inclusion.
Let $P$ be an arbitrary $F'$-rational point on $X'$.
By the assumption, $f(P)$ is in $X(F)$.
Since $f$ is of degree 2, $P$ is in $X'(F'')$ for some extension $F''/F$ with $[F'':F]\leq  2$.
Because $[F':F]$ is odd, we have $F' \cap F''=F$ and thus 
$P\in X'(F')\cap X'(F'')=X'(F)$.
\end{proof}

\begin{cor}
\label{btos}
Let $F'/F$ be a finite extension of fields of characteristic 0 such that   $[F':F]$ is odd.
\begin{enumerate}
    \item If $X_0(15)(F')=X_0(15)(F)$, then $X(s3,b5)(F')=X(s3,b5)(F)$.
    \item If $X_0(21)(F')=X_0(21)(F)$, then $X(s3,b7)(F')=X(s3,b7)(F)$.
\end{enumerate}
\end{cor}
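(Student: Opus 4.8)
The plan is to deduce this immediately from Lemma \ref{points} together with the existence of the degree-$2$ morphisms recorded in Lemma \ref{mc}. For part (1), take $X = X_0(15)$ and $X' = X(s3,b5)$, and let $f\colon X' \to X$ be the degree-$2$ isogeny of Lemma \ref{mc}(2); for part (2), take $X = X_0(21)$ and $X' = X(s3,b7)$, with $f\colon X' \to X$ the degree-$2$ morphism of Lemma \ref{mc}(4). In both cases these morphisms are defined over $\Q$, hence over $F$ after base change, since $F$ has characteristic $0$; and all four curves are smooth projective curves over $F$ (indeed $X_0(15)$, $X(s3,b5)$, $X_0(21)$ are elliptic curves and $X(s3,b7)$ is a smooth projective curve of genus $3$).

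With this setup in place, the hypotheses of Lemma \ref{points} are satisfied: $F'/F$ is a finite extension of odd degree by assumption, and the hypothesis $X(F') = X(F)$ is exactly the assumed equality $X_0(15)(F') = X_0(15)(F)$ (resp.\ $X_0(21)(F') = X_0(21)(F)$). Lemma \ref{points} then yields $X'(F') = X'(F)$, which is the desired conclusion $X(s3,b5)(F') = X(s3,b5)(F)$ (resp.\ $X(s3,b7)(F') = X(s3,b7)(F)$).

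There is no real obstacle here; the only points requiring a word of justification are that the morphisms of Lemma \ref{mc} descend to the base field $F$ (immediate from being defined over $\Q \subseteq F$) and that the source curves are smooth and projective (clear from Lemma \ref{mc}). The substantive content has already been isolated into Lemma \ref{points}, whose proof uses that an odd-degree extension cannot contain a quadratic subextension, so that an $F'$-point lying over an $F$-point under a degree-$2$ map is automatically an $F$-point.
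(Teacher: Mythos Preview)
Your proposal is correct and follows exactly the paper's own argument: the corollary is an immediate consequence of Lemma~\ref{mc}(2),(4) and Lemma~\ref{points}. Your additional remarks about base change to $F$ and smoothness of the curves are fine but not strictly needed beyond what the paper records.
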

\begin{proof}
This is an immediate consequence of Lemma \ref{mc} (2), (4),  and Lemma \ref{points}.
\end{proof}

We also need the following results on the modularity of elliptic curves over totally real fields  with irreducible mod 5 or 7 representations.
\begin{thm}[{\cite[Theorem 7.6]{Th:irred}}]
\label{Th:irred}
Let $F$ be a totally real field which does not contain $\sqrt{5}$. Then every elliptic curve over $F$ whose mod $5$ representation is irreducible is modular. 
\end{thm}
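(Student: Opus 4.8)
The plan is to follow the established route to modularity of elliptic curves over totally real fields: reduce to automorphy lifting theorems via Wiles's ``$3$--$5$ switch,'' using Langlands--Tunnell to start the induction at the prime $3$. Throughout write $G_F=\Gal(\overline F/F)$; the hypothesis $\sqrt 5\notin F$ is included precisely so that the automorphy lifting theorems can be applied at the prime $5$.

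First I would treat the case in which $\overline{\rho}_{E,3}$ has large image. When $\overline{\rho}_{E,3}(G_{F(\zeta_3)})$ is irreducible (and avoids the few genuinely exceptional subgroups of $\GL_2(\F_3)$), the representation $\overline{\rho}_{E,3}$ is automorphic: its image is contained in the solvable group $\GL_2(\F_3)$, so Langlands--Tunnell together with solvable base change exhibits $\overline{\rho}_{E,3}$ as the reduction of the Galois representation attached to a Hilbert modular form, and then a residually irreducible automorphy lifting theorem over totally real fields (Skinner--Wiles, Kisin, Gee, \dots, in the version allowing $\overline{\rho}_{E,3}|_{G_{F(\zeta_3)}}$ to be mildly reducible) upgrades this to modularity of $E$. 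The local hypotheses at $3$ and at the primes of bad reduction are automatic for elliptic curves, up to a harmless solvable base change.

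The substantial case is that $\overline{\rho}_{E,3}$ has small image. Here I would perform the $3$--$5$ switch. Consider the twisted modular curve $X_{E[5]}$ parametrising pairs $(E',\phi)$ with $\phi\colon E'[5]\xrightarrow{\ \sim\ }E[5]$ a symplectic isomorphism of $G_F$-modules. This is a smooth genus-$0$ curve over $F$ which carries the rational point $(E,\mathrm{id})$, hence $X_{E[5]}\cong\PP^1_F$ and has infinitely many $F$-rational points. By Hilbert irreducibility I would select one whose underlying elliptic curve $E'/F$ has $\overline{\rho}_{E',3}$ surjective, with additionally controlled reduction at $3$ and $5$; then $E'$ falls into the large-image case and is modular. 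Consequently $\overline{\rho}_{E,5}\cong\overline{\rho}_{E',5}$ is automorphic, and since it is irreducible by hypothesis, a residually irreducible automorphy lifting theorem at $5$ yields the modularity of $E$ --- provided $\overline{\rho}_{E,5}|_{G_{F(\zeta_5)}}$ is irreducible, which is where $\sqrt 5\notin F$ enters (it forces $[F(\zeta_5):F]$ large enough that the non-dihedral images stay irreducible on $G_{F(\zeta_5)}$).

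The main obstacle is the prime $5$ together with the dihedral case. When the image of $\overline{\rho}_{E,5}$ contains $\SL_2(\F_5)$ the argument above goes through once one checks the adequacy/``big image'' hypotheses, and the exceptional projective images $A_4,S_4,A_5$ can be eliminated or handled directly; but when $\overline{\rho}_{E,5}$ is genuinely dihedral --- induced from a character of a quadratic extension of $F$ --- its restriction to $G_{F(\zeta_5)}$ need not be irreducible, and classical automorphy lifting theorems do not apply. Covering this case requires the residually dihedral automorphy lifting theorems (Thorne; Khare--Thorne), and organising the full case analysis on the image of $\overline{\rho}_{E,5}$ so that each possibility is matched to an applicable lifting theorem --- while checking that the $3$--$5$ switch can always be arranged so that $E'$ avoids the bad configurations --- is the core of the proof.
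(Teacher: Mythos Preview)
The paper does not prove this statement at all: it is quoted verbatim as \cite[Theorem 7.6]{Th:irred} and used as a black box in the proof of Corollary~\ref{reduce57}. So there is no ``paper's own proof'' to compare your proposal against.

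That said, your outline is a fair summary of how Thorne actually proves the result in \cite{Th:irred}. The Langlands--Tunnell input at $p=3$, the $3$--$5$ switch via the twisted modular curve $X_{E[5]}\cong\PP^1_F$ together with Hilbert irreducibility, and the identification of the residually dihedral case at $p=5$ as the essential new difficulty are all correct. You are also right that the main contribution of \cite{Th:irred} is precisely the automorphy lifting theorem in the residually dihedral situation, and that the hypothesis $\sqrt{5}\notin F$ is needed there (it guarantees that $F(\zeta_5)/F$ has even degree, which controls the quadratic field from which the dihedral representation is induced). One small correction: the exceptional projective images $A_4,S_4,A_5$ do not occur for $\overline{\rho}_{E,5}$ with cyclotomic determinant in characteristic $5$ in any problematic way --- the real dichotomy is between image containing $\SL_2(\F_5)$ and the dihedral case, and your sketch already reflects this. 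As a proof \emph{proposal} your last paragraph is honest: the case analysis and the verification that the $3$--$5$ switch can be arranged compatibly with all local conditions is substantial, and carrying it out is the content of \cite{Th:irred} rather than of the present paper.
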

\begin{thm}[{\cite[Theorem 1.5]{Y:ab}}]
\label{Y:irred}
Let $F$ be a totally real field in which $7$ is unramified.
Then every elliptic curve over $F$ whose mod $7$ representation is irreducible is modular.
\end{thm}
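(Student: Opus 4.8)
The plan is to derive modularity of $E/F$ from the residual modularity of $\overline{\rho}_{E,7}$ by means of a modularity lifting theorem at $p=7$, using the hypothesis that $7$ is unramified in $F$ precisely to control the local behaviour at the primes above $7$. Concretely, I would invoke a modularity lifting theorem for $2$-dimensional $7$-adic representations of $\Gal(\overline{F}/F)$ that are de Rham of parallel weight $2$ with the expected determinant, in the form due to Kisin, Gee, Barnet-Lamb--Gee--Geraghty and Thorne. Its hypotheses are that $\overline{\rho}_{E,7}$ be modular; that $\overline{\rho}_{E,7}|_{\Gal(\overline{F}/F(\zeta_7))}$ be absolutely irreducible, so that its image is adequate (for $p=7$ there are no exceptional subgroups to worry about); and that $\rho_{E,7}$ restricted to a decomposition group at each $v\mid 7$ be of an allowed type. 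For this last point, $7$ unramified in $F$ gives $e(F_v/\Q_7)=1$, so $\rho_{E,7}|_{\Gal(\overline{F}_v/F_v)}$ is either ordinary or potentially crystalline with Hodge--Tate weights $\{0,1\}$, lying in the Fontaine--Laffaille range $e<p-1$ (here $e=1<6$); the lifting theorems then apply, after at worst a solvable base change that is undone at the end by cyclic descent. If instead $\overline{\rho}_{E,7}|_{\Gal(\overline{F}/F(\zeta_7))}$ is reducible, then $\overline{\rho}_{E,7}$ is dihedral, induced from a character of $\Gal(\overline{F}/F(\sqrt{-7}))$, and its modularity follows by automorphic induction. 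So from now on we may assume this restriction is absolutely irreducible, and it remains to prove $\overline{\rho}_{E,7}$ is modular.

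To establish residual modularity I would argue by cases on the auxiliary primes $3$ and $5$. If $\overline{\rho}_{E,5}$ is irreducible then $E$ is modular already by Theorem~\ref{Th:irred}; the case $\sqrt{5}\in F$, which the unramifiedness of $7$ does not exclude, is handled separately by applying the $5$-adic lifting theorem, or by passing through the mod $3$ argument. If $\overline{\rho}_{E,5}$ is reducible but $\overline{\rho}_{E,3}$ is irreducible, then, since $\GL_2(\F_3)$ is solvable, $E$ is modular by the Langlands--Tunnell-based modularity of $3$-adic representations over totally real fields. In the remaining case $\overline{\rho}_{E,3}$ and $\overline{\rho}_{E,5}$ are both reducible, so $E$ gives an $F$-rational point of $X(b3,b5)=X_0(15)$. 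Here one cannot simply ``switch'' to another elliptic curve with the same mod $7$ representation, because the relevant twist of $X(7)$ has genus $3$; instead I would invoke Taylor's potential modularity theorem to make $\overline{\rho}_{E,7}$ modular over a totally real Galois extension $L/F$ chosen linearly disjoint from the fields cut out by the relevant data, and then descend back to $F$ along the solvable part of the construction via Langlands' cyclic base change, combined with an explicit determination of the $F$-rational points on $X_0(15)$ and on the quotients of $X_0(63)$ that occur, using that $\overline{\rho}_{E,7}$ irreducible forces $E$ into a proper subfamily (the generic member of $X_0(15)$ having reducible mod $7$ representation).

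With $\overline{\rho}_{E,7}$ shown to be modular in every case, the lifting theorem of the first paragraph applies to $\rho_{E,7}$ and shows that $E$ is modular. The main obstacle is the residual modularity of $\overline{\rho}_{E,7}$ in the last case, when $\overline{\rho}_{E,3}$ and $\overline{\rho}_{E,5}$ are both reducible: unlike the situation for $p=5$, where $X(5)$ has genus $0$ and a rational twist of it feeds Hilbert irreducibility directly (this is what underlies Theorem~\ref{Th:irred}), $X(7)$ has genus $3$, so no twist of it is rational and the elementary switching trick is unavailable. One is forced into the more delicate combination of potential modularity, solvable descent and explicit rational-point calculations on modular curves, and ensuring that this works uniformly over \emph{all} totally real fields $F$ in which $7$ is unramified — rather than over one fixed small field — is the technical heart of the argument.
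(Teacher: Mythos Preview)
The paper does not prove this statement; it is quoted from \cite[Theorem~1.5]{Y:ab} and used as a black box. There is no proof in the present paper to compare your sketch against.

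That said, your outline has the correct architecture: a modularity lifting theorem at $p=7$, with the unramifiedness of $7$ in $F$ supplying the Fontaine--Laffaille bound $e<p-1$ at places above $7$, reduces everything to residual modularity of $\overline{\rho}_{E,7}$. The gaps are all in your handling of that residual step. Invoking Theorem~\ref{Th:irred} when $\overline{\rho}_{E,5}$ is irreducible requires $\sqrt{5}\notin F$, which is not assumed here; your parenthetical ``handled separately by applying the $5$-adic lifting theorem, or by passing through the mod $3$ argument'' is not an argument. When $\overline{\rho}_{E,3}$ is irreducible, Langlands--Tunnell does give residual modularity at $3$, but you then need a lifting theorem at $p=3$ over a totally real field with \emph{no} hypothesis on the ramification of $3$; this is exactly the wild $3$-adic territory of \cite{BCDT} and cannot simply be asserted over general $F$. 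Finally, in the doubly reducible case your appeal to ``explicit determination of the $F$-rational points on $X_0(15)$'' is not feasible for an arbitrary totally real $F$, and the claim that the generic member of $X_0(15)$ has reducible mod $7$ representation is false (the curves $15\mathrm{A}$ have surjective mod $7$ image, as recorded in Lemma~\ref{mcproperties}(1)). The actual proof in \cite{Y:ab} establishes residual modularity at $7$ through a more delicate combination of prime-switching and base-change arguments; supplying those details is precisely the content of that paper, and your sketch does not reproduce them.
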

Applying these two modularity results together with Corollary \ref{btos}, we obtain the following corollary.
\begin{cor} 
\label{reduce57}
Let $F$ be a real quadratic field and $p$ be an odd prime number.
Let $F'$ be a totally real field contained in the cyclotomic $\Z_p$-extension $F_\infty$ of $F$.
Assume that at least one of the following conditions hold:
\begin{enumerate}
    \item $\sqrt{5}\notin F$ and  $X_0(15)(F_\infty)=X_0(15)(F)$.
    \item $7$ is unramified in $F_\infty$ and 
    $X_0(21)(F_\infty)=X_0(21)(F)$.
\end{enumerate}
Then all elliptic curves over $F'$ are modular.
\end{cor}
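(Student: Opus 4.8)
The strategy is to reduce modularity of an arbitrary elliptic curve over $F'$ to the two modularity inputs already available (Theorem \ref{Th:irred} and Theorem \ref{Y:irred}), using the hypothesis that the relevant modular curve has no new rational points in the $\Z_p$-tower. Let $E$ be an elliptic curve over $F'$. In case (1), if the mod $5$ representation $\overline{\rho}_{E,5}$ is irreducible, then since $F'$ is totally real and $\sqrt 5\notin F'$ (as $\sqrt 5\notin F$ and $F'\subset F_\infty$, a $p$-power extension with $p$ odd, so $F'$ contains no quadratic subfield other than those already in $F$), Theorem \ref{Th:irred} applies and $E$ is modular. So we may assume $\overline{\rho}_{E,5}$ is reducible. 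First I would also dispose of the case $\overline{\rho}_{E,3}$ reducible and $\overline{\rho}_{E,5}$ reducible: then $E$ gives an $F'$-point on $X(b3,b5)=X_0(15)$.

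The key observation is that $F'/F$ has odd degree (it is a subextension of $F_\infty/F$, whose degree is a power of the odd prime $p$). Hence by Corollary \ref{btos}(1), the hypothesis $X_0(15)(F_\infty)=X_0(15)(F)$ together with $X_0(15)(F')\subset X_0(15)(F_\infty)$ forces $X_0(15)(F')=X_0(15)(F)$, and likewise $X(s3,b5)(F')=X(s3,b5)(F)$. Now suppose $\overline{\rho}_{E,3}$ is irreducible and $\overline{\rho}_{E,5}$ is reducible: then $E$ defines an $F'$-point on $X(s3,b5)$, which by the above lies in $X(s3,b5)(F)$. Thus the corresponding $j$-invariant lies in $F$, i.e. $E$ is a twist of a curve defined over $F$; but the $j$-invariant of $X(s3,b5)$ (equivalently of $X_0(15)$, since the degree-$2$ isogeny in Lemma \ref{mc}(2) is an isogeny of elliptic curves and does not change $j$, by Lemma \ref{mc}(5)) is $3^{-4}5^{-4}13^3 37^3$, a fixed rational number. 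A curve with this $j$-invariant has $\overline{\rho}_{E,3}$ reducible (it corresponds to a point on $X_0(15)=X(b3,b5)$, which has $3$-isogenies), contradicting irreducibility of $\overline{\rho}_{E,3}$ — unless the curve is one of the finitely many corresponding to the rational points of these curves, which are themselves modular since they descend (up to twist) to $\Q$. More cleanly: any $E/F'$ with $\overline{\rho}_{E,5}$ reducible yields a point on $X_0(3\cdot 5)$-type data, and combining the two cases ($\overline{\rho}_{E,3}$ reducible or not) shows that $E$ corresponds to an $F$-rational point of either $X_0(15)$ or $X(s3,b5)$; all such curves have $j$-invariant in $\Q$, hence are quadratic twists of elliptic curves over $\Q$ (possibly over $F$), and elliptic curves over $\Q$ and their twists over totally real fields are modular by the base-change and solvable descent results underlying \cite{FLS}, \cite{Th:irred}. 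The argument in case (2) is identical, using Corollary \ref{btos}(2), Theorem \ref{Y:irred} (whose hypothesis ``$7$ unramified in $F$'' follows from ``$7$ unramified in $F_\infty$'' since $F\subset F_\infty$ and $F'\subset F_\infty$), and the $j$-invariant $3^{-4}7^{-2}193^3$ from Lemma \ref{mc}(5).

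The main obstacle is the last step: turning ``$E$ corresponds to an $F$-rational point of $X_0(15)$ or $X(s3,b5)$'' into ``$E$ is modular.'' The point is that having $j(E)\in\Q$ a specific value means $E$ becomes isomorphic over $\overline F$ to a fixed curve $E_0/\Q$, so $E$ is a twist of $E_0$ over $F'$; since $E_0/\Q$ is modular and modularity of elliptic curves is preserved under arbitrary twisting over a fixed totally real field (this is standard — twisting corresponds to twisting the associated automorphic representation by a Hecke character) and under solvable base change, $E$ is modular over $F'$. I would need to check carefully that only finitely many $F$-isomorphism classes of twists arise and that each is indeed covered, but since the ambient curves $X_0(15), X_0(21), X(s3,b5), X(s3,b7)$ are elliptic curves of rank data that is well understood (and their relevant points are torsion points or known rational points), this is a routine verification; the genuine content of the corollary is the reduction to the point-counting hypotheses, which the following sections establish via Iwasawa theory.
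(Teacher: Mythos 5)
Your overall reduction is the same as the paper's: split on whether $\overline{\rho}_{E,5}$ (resp.\ $\overline{\rho}_{E,7}$) is irreducible, handle the irreducible case by Theorem \ref{Th:irred} (resp.\ Theorem \ref{Y:irred}) after noting that $F'/F$ has odd $p$-power degree so no new quadratic subfields appear, and in the reducible case view $E$ as an $F'$-point of $X(s3,b5)$ or $X_0(15)$ (resp.\ $X(s3,b7)$ or $X_0(21)$) and use the hypothesis together with Corollary \ref{btos} and the oddness of $[F':F]$ to force the point to be $F$-rational. Up to that point your argument matches the paper.

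The last step, however, contains a genuine error. You assert that the $j$-invariant of $E$ equals the fixed rational number $3^{-4}5^{-4}13^3 37^3$ from Lemma \ref{mc}(5), and hence that $j(E)\in\Q$ and $E$ is a twist of a curve over $\Q$. This conflates two different things: Lemma \ref{mc}(5) gives the $j$-invariant of the modular curve $X_0(15)$ regarded as an elliptic curve (it is used in the paper only to verify ramification/reduction conditions in the Iwasawa-theoretic propositions), whereas the $j$-invariant of the curve $E$ classified by a point $P$ of $X(s3,b5)$ or $X_0(15)$ is the image $j(P)$ under the $j$-map to the $j$-line, which varies with $P$ and in general lies in $F$, not in $\Q$. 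Consequently your ensuing claims (that $\overline{\rho}_{E,3}$ would be forced reducible, that only finitely many twists of curves over $\Q$ occur, that modularity over $\Q$ plus twisting suffices) do not establish the result: an $F$-rational point of these curves need not be $\Q$-rational, so $E$ need not be a twist of a curve over $\Q$. The correct conclusion from the $F$-rationality of $P$ is only that $j(E)\in F$, so $E$ is a twist of (the base change of) an elliptic curve over the real quadratic field $F$; modularity then follows from \cite[Theorem 1]{FLS} (modularity over real quadratic fields), cyclic base change \cite{La} along the $p$-power cyclic extension $F'/F$, and invariance of modularity under quadratic twist. This is exactly how the paper concludes, and no finiteness of the set of twists or knowledge of the specific $j$-values is needed; your closing sentence gestures at the right ingredients but the argument as written rests on the incorrect identification of $j(E)$.
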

\begin{proof}
Let $E$ be an elliptic curve over $F'$.
Assume that the condition (1) holds. If $\overline{\rho}_{E,5}$ is irreducible, then $E$ is modular by Theorem \ref{Th:irred}.
If $\overline{\rho}_{E,5}$ is reducible, then $E$ gives an $F'$-rational point on $X(s3,b5)$ or $X_0(15)$. Then the condition (1) combined with Corollary \ref{btos} (1) implies that $P$ is indeed an $F$-rational point. Therefore, the $j$-invariant of $E$ is in $F$, and the modularity of $E$ follows from \cite[Theorem 1]{FLS} and the cyclic base change \cite{La}.

If we assume the condition (2), the same argument as above shows that $E$ is modular by considering $\overline{\rho}_{E,7}$ instead of $\overline{\rho}_{E,5}$ and using Theorem \ref{Y:irred} and Corollary \ref{btos} (2).
This completes the proof.
\end{proof}

\section{The proof of Theorem \ref{main5} and Theorem \ref{main7}}
\label{mainproof}
\subsection{The modular curves
\texorpdfstring{$X_0(15)$}{X0(15)} and
\texorpdfstring{$X_0(21)$}{X0(21)}
 and their twists}
In this section, we study $F_\infty$-rational points on $X_0(15)$ and $X_0(21)$, where $F_\infty$ is the cyclotomic $\Z_p$-extension of a real quadratic field $F$.

First, we list some properties and invariants of $X_0(15)$ and $X_0(21)$ that we shall need.
\begin{lem}
\label{mcproperties}
The following hold.
\begin{enumerate}
    \item For each prime $l\geq 3$, the mod $l$ representations $\overline{\rho}_{X_0(15),l}$ and  $\overline{\rho}_{X_0(21),l}$ are both surjective onto $\GL_2(\F_l)$.
    \item The Mordell--Weil groups $X_0(15)$ and $X_0(21)$ are both isomorphic to $\Z/2\Z\times \Z/4\Z$.
    \item Let $p$ be an odd prime number.
    For each odd prime number $l$, 
\[
X_0(15)(\Q_\infty)[l^\infty]=
X_0(15)(\Q)[l^\infty] =0, \text{ and}
\]
\[
X_0(21)(\Q_\infty)[l^\infty]=
X_0(21)(\Q)[l^\infty] =0.
\]
Also, 
\[
X_0(15)(\Q_\infty)[2^\infty] = X_0(15)(\Q)[2^\infty] = X_0(15)(\Q), \text{ and}
\]
\[
X_0(21)(\Q_\infty)[2^\infty] = X_0(21)(\Q)[2^\infty] = X_0(21)(\Q).
\]
\item The products of  Tamagawa numbers are given by \[\prod_{l=3,5}c_l(X_0(15))=\prod_{l=3,7}c_l(X_0(21))=8.\]
\item The central $L$-values divided by the real periods are given by 
    \[L(X_0(15),1)/\Omega_{X_0(15)}=L(X_0(21),1)/\Omega_{X_0(21)}=1/8.\]
\end{enumerate}
\end{lem}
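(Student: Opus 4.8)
The plan is to verify the five items separately; (1), (2), (4) and (5) are assertions about the two fixed elliptic curves $X_0(15)$ and $X_0(21)$ (Cremona labels 15A1 and 21A1) and can be read off from Cremona's tables and the LMFDB, while (3) requires a short Galois-theoretic argument. For (2), (4) and (5) I would cite the tabulated Mordell--Weil group, the Tamagawa numbers $c_3$ and $c_5$ (resp.\ $c_3$ and $c_7$), and the ratio $L(E,1)/\Omega_E$; as an internal check, the Birch--Swinnerton-Dyer formula together with $\#E(\Q)_{\mathrm{tors}} = 8$ and $\prod_l c_l = 8$ forces $\#\Sha(E/\Q) = 1$, consistent with analytic (hence algebraic) rank $0$. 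For (1), the mod $l$ images are recorded in the LMFDB and equal $\GL_2(\F_l)$ for all $l \geq 3$; alternatively one can observe that the isogeny classes of 15A1 and 21A1 contain only isogenies of $2$-power degree, so neither curve admits a $\Q$-rational cyclic isogeny of odd prime degree and $\overline{\rho}_{E,l}$ is irreducible for $l \geq 3$, and then rule out the normalizers of split and non-split Cartan subgroups and the three exceptional maximal subgroups of $\GL_2(\F_l)$, which is only at issue for small $l$ and is easy here since the conductors are squarefree.

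The substantive part is (3), and the key observation is that every finite subextension of the cyclotomic $\Z_p$-extension $\Q_\infty/\Q$ has degree a power of the odd prime $p$ over $\Q$. Fix an odd prime $l$. By (1) the representation $\overline{\rho}_{E,l}$ is surjective, so $E(\Q)[l] = 0$ and, for any nonzero $P \in E[l]$, the field $\Q(P)$ is the fixed field of the stabilizer of a nonzero vector in $\GL_2(\F_l)$, whence $[\Q(P):\Q] = l^2 - 1$. For every odd $l \geq 3$ this integer is divisible by $8$ and is never a power of an odd prime, so $\Q(P)$ cannot lie in $\Q_\infty$; hence $E(\Q_\infty)[l^\infty] = E(\Q)[l^\infty] = 0$, the last equality by (2). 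For $l = 2$, where $\overline{\rho}_{E,2}$ is not surjective, I would argue by induction on $k$ that $E(\Q_\infty)[2^k] = E(\Q)[2^k]$. The full $2$-torsion is already $\Q$-rational (it sits inside $\Z/2\Z \times \Z/4\Z$), and if $E(\Q_\infty)[4]$ were all of $E[4]$ then $\Q(E[4])$ would be a nontrivial $2$-power extension of $\Q$ contained in $\Q_\infty$, impossible for odd $p$. For $k \geq 3$, a point $P \in E(\Q_\infty)$ of order $2^k$ would have $2P \in E(\Q_\infty)[2^{k-1}] = E(\Q)[2^{k-1}] = E(\Q)[4] \subset E(\Q)$ by the inductive hypothesis, and then $\sigma \mapsto \sigma(P) - P$ exhibits $\Gal(\Q(P)/\Q)$ as a subgroup of $E[2] \cong (\Z/2\Z)^2$, so $[\Q(P):\Q]$ divides $4$; a nontrivial power of $2$ cannot occur inside $\Q_\infty$, a contradiction. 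This gives $E(\Q_\infty)[2^\infty] = E(\Q)[2^\infty] = E(\Q)$.

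I expect the main obstacle to be making (3) uniform in the odd prime $p$ — note that the lemma does not exclude $p = 3, 5, 7$, even though the main theorems do — so one must check that the degree-counting argument never implicitly uses $p \nmid N_E$, and in the $l = 2$ case one must carefully distinguish $E(\Q_\infty)[2^k]$ as a subgroup of $E[2^k] \cong (\Z/2^k\Z)^2$ from the fixed group $\Z/2\Z \times \Z/4\Z$ so that the induction genuinely closes. Everything else is bookkeeping against the standard databases.
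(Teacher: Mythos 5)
Your proposal is correct and follows essentially the same route as the paper: items (1), (2), (4), (5) are read off from Cremona's tables/LMFDB, and (3) is proved by exploiting that every finite subextension of $\Q_\infty/\Q$ has $p$-power degree, your computation $[\Q(P):\Q]=l^2-1$ for a nonzero $l$-torsion point being exactly the paper's index count for the vector-stabilizer subgroup of $\GL_2(\F_l)$. The only cosmetic difference is at $l=2$, where the paper simply observes that $\Q(X[2^\infty])/\Q$ is pro-$2$ (because $X[2]\subset X(\Q)$, the $2$-adic image lies in the kernel of reduction mod $2$) and hence meets $\Q_\infty$ in $\Q$; this subsumes your induction on $2^k$, which is also fine once one notes at the last step that landing in $E(\Q)$ is precisely the desired conclusion rather than a contradiction.
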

\begin{proof}
Let $X$ denote $X_0(15)$ or $X_0(21)$.
LMFDB \cite{L} tells us that, for each $l\geq 3$, the mod $l$ representations attached to $X$ have the image containing $\SL_2(\F_l)$. Then the part (1) follows because the determinant of the mod $l$ representations $\overline{\rho}_{X,l}$ is the mod $l$  cyclotomic character of $\Gal(\overline{\Q}/\Q)$ and is surjective onto $\F_l^\times$.
The parts (2), (4), and (5) also follow from LMFDB \cite{L} or Cremona's table  \cite{Cr}.
Finally we shall check the part (3) below. 

Let us first consider the case $l\geq 3$.  Suppose contrarily that $X(\Q_\infty)[l^\infty]\neq 0$. Then in particular $X(\Q_\infty)[l]\neq 0$ and the image $\overline{\rho}_{X,l}(\Gal(\overline{\Q}/\Q_\infty))$ is contained in the group
\[
H=\left\{ 
\begin{pmatrix}
1 & b\\
0 & d
\end{pmatrix}\in \GL_2(\F_l)
\middle| \ b\in \F_l, d\in \F_l^\times \right\}\]
for some choice of a basis of $E[l](\overline{\Q})$.
On the other hand, by the part (1), we have  $\overline{\rho}_{X,l}(\Gal(\overline{\Q}/\Q))=\GL_2(\F_l)$.
Also, the index of $\overline{\rho}_{X,l}(\Gal(\overline{\Q}/\Q_\infty))$ in $\overline{\rho}_{X,l}(\Gal(\overline{\Q}/\Q))$ is a power of $p$.
Thus in particular the index of $H$ in $\GL_2(\F_l)$ is a power of $p$ as well; that is, $l^2-1$ is a power of $p$. This implies $p=2$, which  contradicts the assumption that $p$ is odd. (See also the proof in \cite{Z} for the corresponding statement.)

Next we consider the case $l=2$.
Let $P\in X(\Q_\infty)[2^\infty]$. We want to show that $P$ is $\Q$-rational. 
Since the coordinates of $P$ are in $\Q_\infty \cap \Q(X[2^\infty])$, it suffices to show that $\Q_\infty \cap \Q(X[2^\infty])=\Q$.  
By the part (2), we know that all the 2-torsion points on $X$ is $\Q$-rational, and so the  $2$-adic representation
\[
\rho_{X,2}\colon\Gal(\overline{\Q}/\Q)\to \GL_2(\Z_2)\]
 factors through
\begin{align*} \Gal(\Q(X[2^\infty])/\Q)&\hookrightarrow 
\ker(\GL_2(\Z_2)\to \GL_2(\Z/2\Z))\\
&=\begin{pmatrix}
1+2\Z_2 & 2\Z_2\\
2\Z_2 & 1+2\Z_2
\end{pmatrix}.
\end{align*}
Thus the extension $\Q(X[2^\infty])/\Q$ is a pro-$2$ extension. Since $p$ is odd, we have $\Q_\infty \cap \Q(X[2^\infty])=\Q$ as required.
\end{proof}

We deduce from Lemma \ref{mcproperties} a similar result for quadratic twists of $X_0(15)$ and $X_0(21)$.
\begin{lem}
\label{twistedmcproperties}
Let $d>1$ be a square-free integer greater. Let $X_0(15)_d$ (resp. $X_0(21)_d$) denote the quadratic twist of $X_0(15)$ (resp. $X_0(21)$) by the nontrivial Dirichlet character associated with the quadratic extension $\Q(\sqrt{d})/\Q$.
\begin{enumerate}
    \item For each prime $l\geq 3$, the mod $l$ representations $\overline{\rho}_{X_0(15)_d,l}$ and  $\overline{\rho}_{X_0(21)_d,l}$ are both surjective onto $\GL_2(\F_l)$.
    \item For an abelian group $A$, let $A^\mathrm{tors}$ denotes the torsion part of $A$. Then
    \[X_0(15)_d[2](\overline{\Q})\subset X_0(15)_d(\Q)^{\mathrm{tors}}\subset X_0(15)_d[2^\infty](\overline{\Q})\]
    and
    \[X_0(21)_d[2](\overline{\Q})\subset X_0(21)_d(\Q)^{\mathrm{tors}}\subset X_0(21)_d[2^\infty](\overline{\Q}).\]
     
    \item Let $p$ be an odd prime number. Then for each odd prime $l$, 
\[
X_0(15)_d(\Q_\infty)[l^\infty]=
X_0(15)_d(\Q)[l^\infty] =0, \text{ and}
\]
\[
X_0(21)_d(\Q_\infty)[l^\infty]=
X_0(21)_d(\Q)[l^\infty] =0.
\]
Also,
\[
X_0(15)_d(\Q_\infty)[2^\infty] = X_0(15)_d(\Q)[2^\infty] = X_0(15)_d(\Q), \text{ and}
\]
\[
X_0(21)_d(\Q_\infty)[2^\infty] = X_0(21)_d(\Q)[2^\infty] = X_0(21)_d(\Q).
\]
     \item The prime factors appearing in the Tamagawa numbers  $\prod_{l\mid 15d}c_l(X_0(15)_d)$ or $\prod_{l\mid 21d}c_l(X_0(21)_d)$ are $2$ or $3$.
\end{enumerate}
\end{lem}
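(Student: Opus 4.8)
The plan is to deduce every assertion from Lemma~\ref{mcproperties} by tracking how the relevant data behave under quadratic twisting; throughout let $X$ denote $X_0(15)$ or $X_0(21)$, and let $X_d$ be its twist by the quadratic character $\chi_d$ of $\Q(\sqrt d)/\Q$. For~(1) I would use $\overline{\rho}_{X_d,l}\cong\overline{\rho}_{X,l}\otimes\chi_d$: for $l$ odd the character $\chi_d\bmod l$ takes values in the central subgroup $\{\pm I\}$ of $\GL_2(\F_l)$, so the image of $\overline{\rho}_{X_d,l}$ lies in $\langle -I\rangle\cdot\GL_2(\F_l)=\GL_2(\F_l)$, and symmetrically $\GL_2(\F_l)=\overline{\rho}_{X,l}(\Gal(\overline{\Q}/\Q))$ lies in $\langle -I\rangle$ times the image of $\overline{\rho}_{X_d,l}$ by Lemma~\ref{mcproperties}(1). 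Hence that image has index at most $2$ in $\GL_2(\F_l)$, so (as $l\geq 3$) it contains $[\GL_2(\F_l),\GL_2(\F_l)]=\SL_2(\F_l)\ni -I$; together with the containment just noted this forces the image to be all of $\GL_2(\F_l)$, which is~(1).

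For~(2) I would note that the $2$-division polynomial of $X_d$ differs from that of $X$ only by a rescaling of the variable, so $\Q(X_d[2])=\Q(X[2])=\Q$ by Lemma~\ref{mcproperties}(2); this gives the first inclusion $X_d[2](\overline{\Q})=X_d[2](\Q)\subseteq X_d(\Q)^{\mathrm{tors}}$. For the second, a rational point of odd prime order $l$ would make $\overline{\rho}_{X_d,l}$ fix a line, contradicting~(1), so $X_d(\Q)^{\mathrm{tors}}$ has no odd part and lies in $X_d[2^\infty](\overline{\Q})$.

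For~(3) the prime-to-$2$ statement is the argument of Lemma~\ref{mcproperties}(3): if $X_d(\Q_\infty)[l]\neq 0$ for an odd prime $l$, then $\overline{\rho}_{X_d,l}(\Gal(\overline{\Q}/\Q_\infty))$ lies in the stabiliser of a nonzero vector, of index $l^2-1$ in $\GL_2(\F_l)=\overline{\rho}_{X_d,l}(\Gal(\overline{\Q}/\Q))$ by~(1), while this index is a power of $p$ because $\Gal(\Q_\infty/\Q)\cong\Z_p$; since $8\mid l^2-1$ this forces $p=2$, contrary to hypothesis, and the equality with $X_d(\Q)[l^\infty]$ is then immediate. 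For $l=2$, rationality of $X_d[2]$ from~(2) puts the image of $\rho_{X_d,2}$ inside $\ker(\GL_2(\Z_2)\to\GL_2(\F_2))$, so $\Q(X_d[2^\infty])/\Q$ is pro-$2$ and meets the $\Z_p$-extension $\Q_\infty$ only in $\Q$; hence $X_d(\Q_\infty)[2^\infty]=X_d(\Q)[2^\infty]$, which by~(2) equals $X_d(\Q)^{\mathrm{tors}}$. The identification of this last group with all of $X_d(\Q)$ amounts to finiteness of the Mordell--Weil group, which in the applications of this lemma comes from the non-vanishing of the central $L$-value via \cite[Corollary 14.3]{Ka}.

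For~(4) I would bound the Tamagawa factors one bad prime at a time. If $l\mid d$ and $l$ does not divide the conductor of $X$, then $X$ has good reduction at $l$ and $X_d$ is its twist by a quadratic character ramified at $l$, so $X_d$ has additive reduction of Kodaira type $I_0^*$ when $l\geq 3$, or one of the potentially good types $II,\dots,II^*$ when $l=2$; in every case $c_l\leq 4$. At the conductor primes I would use $v_l(j_X)=-v_l(\Delta_X)$ together with Lemma~\ref{mc}(5) to get $v_3(\Delta)=v_5(\Delta)=4$ for $X_0(15)$ and $v_3(\Delta)=4$, $v_7(\Delta)=2$ for $X_0(21)$; if such an $l$ does not divide $d$ then $X_d$ still has multiplicative reduction there with the same discriminant valuation, so $c_l$ is $v_l(\Delta)$ or $\gcd(2,v_l(\Delta))$, a power of $2$, and if $l\mid d$ then $X_d$ has additive reduction of type $I^*_{v_l(\Delta)}$, whose component group has order dividing $4$. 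Taking the product over the bad primes, the only rational primes occurring in $\prod_l c_l(X_d)$ are $2$ and $3$. I expect~(4) to be the step that takes the most care: nothing in it is deep, but it is the one assertion with no direct analogue in Lemma~\ref{mcproperties}, and the case distinctions --- conductor primes versus primes dividing $d$, split versus non-split multiplicative reduction, and the exceptional Kodaira types at $l=2$ --- are where the bookkeeping must be done carefully.
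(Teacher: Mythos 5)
Your proof is correct and follows essentially the same route as the paper: every part is deduced from Lemma \ref{mcproperties} by tracking the effect of the quadratic twist, part (3) repeats the argument of Lemma \ref{mcproperties} (3), and part (4) is exactly the Kodaira--N\'eron analysis the paper invokes via Lemma \ref{mc} (5) and \cite[Theorem 6.1]{Si}, which you simply carry out in more detail. Your caveat about the last equality $X_0(15)_d(\Q)[2^\infty]=X_0(15)_d(\Q)$ (and likewise for $X_0(21)_d$) in (3) is well taken: that equality presupposes finiteness of the twisted Mordell--Weil group, which the lemma's hypotheses do not guarantee and which the paper's own proof does not supply either; in the applications this finiteness indeed enters only later, through the $L$-value hypothesis and Kato's theorem, and only the torsion statements of (3) are actually used.
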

\begin{proof}
The part (1) follows immediately from Lemma \ref{mcproperties} (1); in fact, the surjectivity of a mod $l$ representation is preserved by a twist by any (quadratic) character. 

By Lemma \ref{mcproperties} (2), the mod $2$ representations  $\overline{\rho}_{X_0(15),2}$ and $\overline{\rho}_{X_0(21),2}$ are both trivial, and so their twists by a quadratic character are also trivial. This shows that $X_0(15)_d(\Q)^{\mathrm{tors}}$  (resp. $X_0(21)_d(\Q)^{\mathrm{tors}}$) contains  $X_0(15)_d[2](\overline{\Q})$ (resp. $X_0(21)_d[2](\overline{\Q})$). The other inclusions are an immediate consequence of the part (1).

By using the parts (1) and (2), the same argument as Lemma \ref{mcproperties} (3) shows the part (3). 

The part (4) follows from Lemma \ref{mc} (5) and the description of special fibers of elliptic curves over local fields due to Kodaira and N\'eron; see \cite[Theorem 6.1]{Si} for example.
\end{proof}

The following property of quadratic twists of elliptic curves is well-known, but we record it with a proof for the readers' convenience.
\begin{lem}
\label{twist}
Let $E$ be an elliptic curve over a field $K$ of characteristic different from $2$.
Let $L=K(\sqrt{d})$  ($d\in K$) be a quadratic extension of $K$ and write  $\sigma\in \Gal(L/K)$ for the nontrivial element.
Then we have an isomorphism $E(L)^{\sigma=-1}\simeq E_d(K)$ of groups, where $E_d$ denotes the quadratic twist of $E$ by $d$.
\end{lem}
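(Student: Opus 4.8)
The plan is to give an explicit model of the quadratic twist and write down the isomorphism directly. Fix a Weierstrass equation $y^2 = x^3 + ax + b$ for $E$ over $K$; then $E_d$ is the curve $dy^2 = x^3 + ax + b$, or equivalently, after the change of variables $(x,y) \mapsto (x/d, y/d^2)$ scaled appropriately, the curve $E_d\colon Y^2 = X^3 + ad^2 X + bd^3$. The point of recording these coordinates is that over $L = K(\sqrt d)$ there is an explicit isomorphism $\phi\colon E_L \xrightarrow{\ \sim\ } (E_d)_L$ defined over $L$, given in the first model by $(x,y) \mapsto (x, \sqrt d\, y)$ (or in the second model by $(x,y)\mapsto(dx, d\sqrt d\, y)$). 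This $\phi$ is the twisting isomorphism: it is not defined over $K$, but its failure to descend is governed by a single Galois cocycle.

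The key computation is the compatibility of $\phi$ with the Galois action. For $\sigma \in \Gal(L/K)$ the nontrivial element, one checks from the formula that $\sigma \circ \phi \circ \sigma^{-1} = \phi \circ [-1]$, since $\sigma(\sqrt d) = -\sqrt d$ and $[-1]$ acts by $(x,y)\mapsto (x,-y)$ on the chosen Weierstrass model. (Here I use that the coefficients $a,b$, and hence $E_d$ and $[-1]$, are all defined over $K$, so $\sigma$ commutes with $[-1]$ and fixes $E_d$.) Therefore, for $P \in E(L)$, the point $\phi(P)$ lies in $E_d(K)$ if and only if $\sigma(\phi(P)) = \phi(P)$, i.e.\ $\phi(\sigma P) \circ$ wait — more precisely $\sigma(\phi(P)) = \phi(\sigma^{-1}\cdot$ no: from $\sigma\phi\sigma^{-1} = \phi\circ[-1]$ we get $\sigma(\phi(P)) = \phi([-1]\sigma(P)) = \phi(-\sigma P)$, so $\phi(P)$ is $K$-rational exactly when $\phi(P) = \phi(-\sigma P)$, that is, when $P = -\sigma P$, i.e.\ $\sigma(P) = -P$. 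Hence $\phi$ restricts to a bijection $E(L)^{\sigma = -1} \to E_d(K)$.

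It remains to note this bijection is a group homomorphism: $\phi$ is an isomorphism of elliptic curves over $L$, so it is automatically a group homomorphism on $L$-points, and restricting to the subgroup $E(L)^{\sigma=-1}$ (which is indeed a subgroup, being the kernel of the homomorphism $P \mapsto P + \sigma P$ on $E(L)$) preserves the group law; its image is the subgroup $E_d(K) \subset E_d(L)$. This gives the asserted isomorphism $E(L)^{\sigma=-1} \simeq E_d(K)$.

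I do not anticipate a serious obstacle here: the statement is classical and the whole proof is the cocycle bookkeeping in the previous paragraph. The only point requiring a little care is the sign/conjugation identity $\sigma \circ \phi \circ \sigma^{-1} = \phi \circ [-1]$ and tracking it through to the condition $\sigma(P) = -P$ rather than, say, $\sigma(P) = P$; one should also make sure the argument does not secretly use $d \notin K^{\times 2}$, so that the degenerate case is handled uniformly, though since the lemma explicitly assumes $L/K$ is a quadratic extension this is moot. Characteristic $\neq 2$ is used precisely to have a short Weierstrass model on which $[-1]\colon (x,y)\mapsto(x,-y)$ and on which the twist is visibly $(x,y)\mapsto(x,\sqrt d\,y)$.
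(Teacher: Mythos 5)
Your proof is correct and takes essentially the same route as the paper: write down an explicit Weierstrass model, the explicit twisting isomorphism over $L$, and check Galois-equivariance, which forces the condition $\sigma P=-P$; the paper simply states the final verification ("one can check\dots") while you carry out the cocycle computation. Two cosmetic slips worth fixing: the short form $y^2=x^3+ax+b$ requires characteristic $\neq 2,3$, so to cover characteristic $3$ use $y^2=x^3+ax^2+bx+c$ as the paper does, and for the model $dY^2=X^3+aX+b$ the twisting map $E\to E_d$ is $(x,y)\mapsto(x,y/\sqrt{d})$ rather than $(x,\sqrt{d}\,y)$ (your formula is the inverse direction) --- neither point affects the conjugation identity or the conclusion.
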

\begin{proof}
Take a Weierstrass equation defining $E$ of the form
\[y^2=x^3+ax^2+bx+c\quad (a,b,c\in K).\]
Then by definition of a quadratic twist, $E_d$ is given by the Weierstrass equation 
\[dy^2=x^3+ax^2+bx+c, \]
and the isomorphism $\varphi\colon E\stackrel{\simeq}{\longrightarrow}E_d$ over $L$ is  given by $\varphi(x,y)= (x, y/\sqrt{d})$.
One can check that $E(L)^{\sigma=-1}$ corresponds to $E_d(K)$ under this isomorphism.
\end{proof}
\begin{prop}
\label{15growth}
Let $p$ be a prime number such that $p\neq 2, 3$ or $5$.
\begin{enumerate}
    \item  $X_0(15)(\Q_\infty)=X_0(15)(\Q)$.
    \item Let $d>1$ be a square-free integer such that $(d,3p)=1$ or $(d,5p)=1$.
    If $L(X_0(15)_d,1)/\Omega_{X_0(15)_d}$ is a $p$-adic unit, then   $X_0(15)_d(\Q_\infty)=X_0(15)_d(\Q)$.
\end{enumerate}
\end{prop}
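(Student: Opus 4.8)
The plan is to feed the elliptic curve $X_0(15)$ (for part~(1)) or its quadratic twist $X_0(15)_d$ (for part~(2)) into the Iwasawa-theoretic finiteness results established earlier --- Corollary~\ref{ordfin} when the curve has good ordinary reduction at $p$, and Theorem~\ref{Ku} when it has good supersingular reduction --- so as to conclude that the relevant group of $\Q_\infty$-points is \emph{finite}. Once finiteness is known, that group coincides with its own torsion subgroup, which Lemma~\ref{mcproperties}(3) (resp.\ Lemma~\ref{twistedmcproperties}(3), together with the fact that $X_0(15)_d(\Q)_{\mathrm{tors}}$ is a $2$-group by Lemma~\ref{twistedmcproperties}(2) and that a finite group of $\Q_\infty$-points is all torsion) identifies with $X_0(15)(\Q)$ (resp.\ $X_0(15)_d(\Q)$). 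So the substance of the proof is the verification of the hypotheses of Corollary~\ref{ordfin} and Theorem~\ref{Ku}.

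For part~(1): since $p\neq 3,5$, $X_0(15)$ has good reduction at $p$, hence ordinary or supersingular there. If supersingular, Theorem~\ref{Ku} applies, because $\overline\rho_{X_0(15),p}$ is surjective onto $\GL_2(\F_p)$ by Lemma~\ref{mcproperties}(1) (recall $p\geq 3$), $L(X_0(15),1)/\Omega_{X_0(15)}=1/8$ is a $p$-adic unit as $p\neq 2$ by Lemma~\ref{mcproperties}(5), and $\prod_{l\mid 15}c_l(X_0(15))=8$ is prime to $p$ by Lemma~\ref{mcproperties}(4). If ordinary, Corollary~\ref{ordfin} applies: conditions (1),(2),(4) are as just noted, (6) holds since $\#X_0(15)(\Q)=8$ by Lemma~\ref{mcproperties}(2); for (3) take $q=3$, which exactly divides the conductor $15$ (multiplicative reduction at $3$) and at which $\overline\rho_{X_0(15),p}$ is ramified, since for a prime $q$ of multiplicative reduction this holds exactly when $p\nmid v_q(j)$, and $v_3(j(X_0(15)))=-4$ by Lemma~\ref{mc}(5) while $p\neq 2$; for (5), the reduction map is injective on torsion for $p\geq 3$, so $X_0(15)(\Q)_{\mathrm{tors}}\cong\Z/2\Z\times\Z/4\Z$ embeds into $\widetilde{X_0(15)}_p(\F_p)$, giving $8\mid\#\widetilde{X_0(15)}_p(\F_p)$; but $\#\widetilde{X_0(15)}_p(\F_p)<2p$ for $p\geq 7$ by the Hasse bound, so $p\mid\#\widetilde{X_0(15)}_p(\F_p)$ would force $\#\widetilde{X_0(15)}_p(\F_p)=p$, contradicting $8\mid p$. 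Either way $X_0(15)(\Q_\infty)$ is finite, hence equal to $X_0(15)(\Q)$.

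For part~(2): the hypothesis ``$(d,3p)=1$ or $(d,5p)=1$'' forces $p\nmid d$ and forces at least one $q\in\{3,5\}$ with $q\nmid d$ (so $3$ and $5$ do not both divide $d$). Since $p\neq 2,3,5$ and $p\nmid d$, the Dirichlet character $\chi_d$ attached to $\Q(\sqrt d)/\Q$ is unramified at $p$, so $X_0(15)_d$ has good reduction at $p$, of the same type as $X_0(15)$ because the twist multiplies $a_p$ by $\pm 1$. Also $j(X_0(15)_d)=j(X_0(15))$, and $\chi_d$ is unramified at the odd prime $q$, so $X_0(15)_d$ still has multiplicative reduction at $q$ with $v_q(j)=-4$ unchanged; thus $q$ exactly divides $N_{X_0(15)_d}$, $q\neq p$, and $\overline\rho_{X_0(15)_d,p}$ is ramified at $q$ because $p\neq 2$. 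Now if $X_0(15)_d$ is supersingular at $p$, apply Theorem~\ref{Ku} (surjectivity from Lemma~\ref{twistedmcproperties}(1); the $L$-value is a $p$-adic unit by hypothesis; $\prod_l c_l(X_0(15)_d)$ is prime to $p$ by Lemma~\ref{twistedmcproperties}(4), as $p\neq 2,3$); if it is ordinary at $p$, apply Corollary~\ref{ordfin}, where (3) was just verified, (4) is the hypothesis, (5) holds because $X_0(15)_d[2](\overline\Q)\subset X_0(15)_d(\Q)$ (Lemma~\ref{twistedmcproperties}(2)) gives $4\mid\#\widetilde{(X_0(15)_d)}_p(\F_p)$ while the Hasse bound $<2p$ for $p\geq 7$ again excludes $p\mid\#\widetilde{(X_0(15)_d)}_p(\F_p)$, and (6) holds because the $p$-adic unit hypothesis gives $L(X_0(15)_d,1)\neq 0$, so (as recalled inside the proof of Corollary~\ref{ordfin}) $X_0(15)_d(\Q)$ is finite and equal to its torsion subgroup, a $2$-group by Lemma~\ref{twistedmcproperties}(2), hence prime to $p$. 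Either way $X_0(15)_d(\Q_\infty)$ is finite, hence equal to $X_0(15)_d(\Q)$.

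I expect the main obstacle to lie in conditions (3) and (5) of Corollary~\ref{ordfin} in the ordinary case: one must invoke the Tate-curve criterion that $\overline\rho_{E,p}$ is ramified at a prime $q$ of multiplicative reduction exactly when $p\nmid v_q(j_E)$, keep careful track of which of $3$ and $5$ survives the twist as a prime of multiplicative reduction coprime to $p$ --- which is precisely what the hypothesis ``$(d,3p)=1$ or $(d,5p)=1$'' guarantees --- and run the Hasse-bound argument to rule out anomalous primes. By comparison, the supersingular case and the passage from finiteness to the equality of point sets are routine given Lemmas~\ref{mcproperties} and~\ref{twistedmcproperties}.
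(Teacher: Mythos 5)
Your proposal is correct and takes essentially the same approach as the paper, which writes out the argument for the analogous Proposition \ref{21growth} and notes that the proof of Proposition \ref{15growth} is identical: good reduction at $p$, a split into the ordinary and supersingular cases, verification of the hypotheses of Corollary \ref{ordfin} resp.\ Theorem \ref{Ku} via Lemmas \ref{mc}, \ref{mcproperties}, \ref{twistedmcproperties} (including ramification at the surviving prime $q\in\{3,5\}$ of multiplicative reduction via the $j$-invariant criterion and the Hasse-bound exclusion of $p\mid\#\widetilde{Y}_p(\F_p)$), and then the torsion comparison to pass from finiteness of the $\Q_\infty$-points to equality with the $\Q$-points. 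Your verifications of conditions (3), (5), (6) match the paper's, in places spelled out a bit more explicitly.
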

\begin{prop}
\label{21growth}
Let $p$ be a prime number not dividing such that $p\neq 2, 3$ or $7$.
\begin{enumerate}
    \item $X_0(21)(\Q_\infty)=X_0(21)(\Q)$.
    \item Let $d>1$ be a square-free integer such that $(d,3p)=1$ or $(d,7p)=1$.
    If $L(X_0(21)_d,1)/\Omega_{X_0(21)_d}$ is a $p$-adic unit, then   $X_0(21)_d(\Q_\infty)=X_0(21)_d(\Q)$.
\end{enumerate}
\end{prop}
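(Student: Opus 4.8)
The plan is to deduce both parts from the single claim that the Mordell--Weil group over $\Q_\infty$ of the curve in question is \emph{finite}. Part (1) is the case of $X_0(21)$ itself, where the $L$-value hypothesis is automatic because $L(X_0(21),1)/\Omega_{X_0(21)}=1/8$ from Lemma \ref{mcproperties} (5) is a $p$-adic unit for odd $p$; part (2) is the same argument applied to the twist $E:=X_0(21)_d$, with Lemma \ref{twistedmcproperties} replacing Lemma \ref{mcproperties}, so I describe part (2). First I would reduce to finiteness: both alternatives of the hypothesis on $d$ give $p\nmid d$, so together with $p\neq 3,7$ the curve $E$ has good reduction at $p$; moreover $L(E,1)\neq 0$, so $E(\Q)$ is finite by \cite[Corollary~14.3]{Ka} and hence a $2$-group by Lemma \ref{twistedmcproperties} (2). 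By Lemma \ref{twistedmcproperties} (3) we have $E(\Q_\infty)[l^\infty]=E(\Q)[l^\infty]$ for every prime $l$, whence $E(\Q_\infty)_{\mathrm{tors}}=E(\Q)$; therefore, as soon as $E(\Q_\infty)$ is known to be finite it equals its own torsion subgroup, i.e.\ $E(\Q)$, which is the assertion.

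It then remains to prove $E(\Q_\infty)$ is finite, and here I would split into cases according to the reduction type of $E$ at $p$. If $E$ has good supersingular reduction at $p$, apply Kurihara's Theorem \ref{Ku}: surjectivity of $\overline{\rho}_{E,p}$ is Lemma \ref{twistedmcproperties} (1) (valid since $p\geq 5$), the $L$-value condition is the hypothesis, and $p\nmid\prod_{l\mid N_E}c_l(E)$ because by Lemma \ref{twistedmcproperties} (4) that product is divisible only by $2$ and $3$. If $E$ has good ordinary reduction at $p$, apply Corollary \ref{ordfin}: conditions (1), (2), (4) are immediate (again (2) is Lemma \ref{twistedmcproperties} (1)) and (6) holds because $E(\Q)$ is a finite $2$-group; for condition (3) I would use the hypothesis on $d$ --- if $(d,3p)=1$ then $3\nmid d$, so $E$ still has multiplicative reduction at $3$ (hence $3$ exactly divides $N_E$), and $v_3(j_E)=v_3(j_{X_0(21)})=-4$ by Lemma \ref{mc} (5) with $p\nmid 4$, so $\overline{\rho}_{E,p}$ is ramified at $q=3$ by the Tate-curve description; symmetrically, if $(d,7p)=1$ one takes $q=7$, using $v_7(j_E)=-2$ and $p\nmid 2$. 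For part (1) the analogous checks use that $X_0(21)$ has conductor $21$, that $X_0(21)(\Q)\cong\Z/2\Z\times\Z/4\Z$ of order $8$, and that $\prod_{l=3,7}c_l(X_0(21))=8$, all from Lemmas \ref{mc} and \ref{mcproperties}.

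The one condition needing a genuine observation --- the step I expect to be the main point --- is condition (5) of Corollary \ref{ordfin}, namely $p\nmid\#\widetilde{E}_p(\F_p)$, which \emph{a priori} could fail at an anomalous prime. Here one exploits the large rational $2$-torsion: $(\Z/2\Z)^2\hookrightarrow E(\Q)_{\mathrm{tors}}$ by Lemma \ref{twistedmcproperties} (2), and $E(\Q)_{\mathrm{tors}}$ is prime to $p$ by Lemma \ref{twistedmcproperties} (3), so it injects under reduction modulo $p$ and $4\mid\#\widetilde{E}_p(\F_p)$; if $p$ also divided $\#\widetilde{E}_p(\F_p)$ then $4p\mid\#\widetilde{E}_p(\F_p)$, contradicting the Hasse bound $\#\widetilde{E}_p(\F_p)\leq(\sqrt{p}+1)^2<4p$. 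In part (1) one uses $8\mid\#\widetilde{X_0(21)}_p(\F_p)$ instead, with the same conclusion. Beyond this I foresee no real difficulty: the proof is essentially a matter of checking the hypotheses of Corollary \ref{ordfin} and Theorem \ref{Ku} against the data tabulated in Lemmas \ref{mc}, \ref{mcproperties}, and \ref{twistedmcproperties}, the only other thing to keep in mind being to treat supersingular $p$ via Kurihara's theorem rather than via the ordinary-reduction Corollary \ref{ordfin}.
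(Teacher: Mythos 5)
Your proposal is correct and follows essentially the same route as the paper: reduce to finiteness of the Mordell--Weil group over $\Q_\infty$ via the torsion comparison in Lemmas \ref{mcproperties}(3)/\ref{twistedmcproperties}(3), then split into ordinary and supersingular cases and verify the hypotheses of Corollary \ref{ordfin} and Theorem \ref{Ku}, with the anomalous-prime condition ruled out exactly as in the paper by the rational $2$-torsion plus the Hasse bound. The only cosmetic differences are that you verify the ramification condition (3) directly from the Tate-curve description at whichever of $3$ or $7$ the hypothesis on $d$ keeps multiplicative (the paper cites \cite[Prop.\ 2.12(c)]{DDT} for the same fact), and you derive condition (6) from Kato's finiteness of $E(\Q)$ plus the $2$-group torsion rather than quoting the lemma directly.
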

Our proof of Proposition \ref{15growth} and Proposition \ref{21growth} are exactly the same.
Also, Proposition \ref{15growth} (1) is included in \cite{Th:cyclo}.
Thus we only give a proof of Proposition \ref{21growth}.
\begin{proof}[Proof of Proposition \ref{21growth}]
The strategy of the proof is similar to \cite{Th:cyclo}.
Let 
$Y$ denote $X_0(21)$ or $X_0(21)_d$.
To prove the proposition, we need to show that $Y(\Q_\infty)$ and $Y(\Q)$ have the same torsion part and that $Y(\Q_\infty)$ is of rank $0$.
The claim about the torsion part follows immediately from Lemma \ref{mcproperties} (3) and Lemma \ref{twistedmcproperties} (3). So it suffices to show that $Y(\Q_\infty)$ is finite.
Note that $Y$ has good reduction at $p$ because of the choice of $p$ and $d$.

Assume first that $Y$ has good ordinary reduction at $p$. We shall check that $Y$ satisfies the conditions (1)-(6) in Corollary \ref{ordfin}.
\begin{itemize}
    \item The condition (1) holds since we  assume here that $Y$ has good ordinary reduction at $p$.
    \item The condition (2) holds by Lemma \ref{mcproperties} (1) or Lemma \ref{twistedmcproperties} (1).
    \item We check the condition (3). By Lemma \ref{mc} (5) and \cite[Proposition 2.12 (c)]{DDT}, we see that  $\overline{\rho}_{Y,p}$ is ramified at $3$ and $7$. Thus, to check  the condition (3), it is enough to show that $3||N_Y$ or $7||N_Y$. This is obvious when $Y=X_0(21)$, since $X_0(21)$ is the elliptic curve with Cremona label 21A1, which has the conductor $21$. Next, consider the case where $Y=X_0(21)_d$. By the assumption that  $(d,3)=1$ or $(d,7)=1$, we see that $Y$ has multiplicative reduction at $3$ or $7$, and hence we have $3||N_Y$ or $7||N_Y$, where $N_Y$ is the conductor of $Y$.
    \item If $Y=X_0(21)$, then the condition (4) follows from Lemma \ref{mcproperties} (4). If $Y=X_0(21)_d$, then it follows  from the assumption. 
    \item We check the condition (5) as follows.
    Suppose contrarily that $p$ divides $\# Y(\F_p)$.  In both cases where $Y=X_0(21)$ and $X_0(21)_d$, we have $4p\mid \# Y(\F_p)$; in fact, $Y[2](\Q)$ injects into $Y(\F_p)$, and $Y(\Q)$ contains all the 2-torsion points of $Y$ by Lemma \ref{mcproperties} (2) and Lemma \ref{twistedmcproperties} (2).
    However, this contradicts to the Hasse bound $|1+p-\#Y(\F_p)|\leq 2\sqrt{p}$. 
    \item The condition (6) follows from Lemma \ref{mcproperties} (2) and Lemma \ref{twistedmcproperties} (2).
\end{itemize}
Therefore, we can apply Corollary \ref{ordfin} to $Y$ and thus $Y(\Q_\infty)$ is finite.
    
     Assume next that $Y$ has good supersingular reduction at $p$. We check that $Y$ satisfies the conditions in Theorem \ref{Ku}.
     \begin{itemize}
    \item The condition (1) holds since we  assume here that $Y$ is ordinary at $p$.
    \item The condition (2) holds by Lemma \ref{mcproperties} (1) and Lemma \ref{twistedmcproperties} (1).
    \item If $Y=X_0(21)$, then the condition (3) follows from Lemma \ref{mcproperties} (5). If $Y=X_0(21)_d$, then it follows  from the assumption.
    \item The condition (4) for $Y=X_0(21)$ follows from Lemma \ref{mcproperties} (4).  For $Y=X_0(3q)_d$, it follows from Lemma \ref{twistedmcproperties} (4).
     \end{itemize}
     Hence we can apply Theorem \ref{Ku} to $Y$ and obtain the finiteness of $Y(\Q_\infty)$.
\end{proof}
The following are the key to our proof of Theorem \ref{main5} and Theorem \ref{main7}.
\begin{prop}
\label{keyprop15}
Let $p$ be a prime number such that $p\neq 2, 3$ or $5$, and $d>1$ be a square-free integer such that $(d,3p)=1$ or $(d,5p)=1$.
Assume that $L(X_0(15)_d,1)/\Omega_{X_0(15)_d}$ is a $p$-adic unit.
Then we have $X_0(15)(\Q(\sqrt{d})_\infty)=X_0(15)(\Q(\sqrt{d}))$.
\end{prop}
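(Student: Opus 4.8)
The plan is to set $F=\Q(\sqrt d)$, let $F_\infty=F\cdot\Q_\infty$ be the cyclotomic $\Z_p$-extension of $F$, and write $E=X_0(15)$, $E_d=X_0(15)_d$. Since $p$ is odd, $F/\Q$ and $\Q_\infty/\Q$ are linearly disjoint, so $\Gal(F_\infty/\Q)\cong\Z/2\Z\times\Z_p$; in particular every finite subextension of $F_\infty/\Q$ has degree of the form $2^ap^b$ with $a\le 1$, and the maximal pro-$2$ subextension of $F_\infty/\Q$ is $F$. It suffices to prove that $E(F_\infty)$ is finite and that every torsion point of $E(F_\infty)$ lies in $E(F)$: granting these, $E(F_\infty)=E(F_\infty)^{\mathrm{tors}}\subseteq E(F)\subseteq E(F_\infty)$, which is the assertion.

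For the finiteness, let $\sigma$ be the nontrivial element of $\Gal(F_\infty/\Q_\infty)\cong\Gal(F/\Q)$. For any $P\in E(F_\infty)$ one has $2P=(P+\sigma P)+(P-\sigma P)$, where $\sigma(P+\sigma P)=P+\sigma P$ and $\sigma(P-\sigma P)=-(P-\sigma P)$, so $P+\sigma P\in E(\Q_\infty)$ and $P-\sigma P\in E(F_\infty)^{\sigma=-1}$; hence $2E(F_\infty)\subseteq E(\Q_\infty)+E(F_\infty)^{\sigma=-1}$. By Proposition~\ref{15growth}(1) together with Lemma~\ref{mcproperties}(2), $E(\Q_\infty)=E(\Q)$ is finite. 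By Lemma~\ref{twist} applied to the quadratic extension $F_\infty/\Q_\infty$, $E(F_\infty)^{\sigma=-1}\cong E_d(\Q_\infty)$, which is finite by Proposition~\ref{15growth}(2) (whose hypotheses are exactly those assumed here). Therefore $2E(F_\infty)$ is finite, and since $E(F_\infty)[2]$ is finite we conclude $E(F_\infty)$ is finite.

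For the torsion I would treat odd $\ell$ and $\ell=2$ separately. If $\ell$ is odd and $E(F_\infty)[\ell]\neq 0$, then $\overline{\rho}_{E,\ell}(\Gal(\overline{\Q}/F_\infty))$ fixes a nonzero vector of $\F_\ell^2$, hence is contained in a subgroup $H\subseteq\GL_2(\F_\ell)$ of order $\ell(\ell-1)$. By Lemma~\ref{mcproperties}(1), $\overline{\rho}_{E,\ell}$ identifies $\GL_2(\F_\ell)$ with $\Gal(\Q(E[\ell])/\Q)$ and $\overline{\rho}_{E,\ell}(\Gal(\overline{\Q}/F_\infty))$ with $\Gal(\Q(E[\ell])/\Q(E[\ell])\cap F_\infty)$, so $[\Q(E[\ell])\cap F_\infty:\Q]$ is a multiple of $[\GL_2(\F_\ell):H]$ and hence of $\ell^2-1$; but $[\Q(E[\ell])\cap F_\infty:\Q]=2^ap^b$ with $a\le 1$, whereas $4\mid\ell^2-1$, a contradiction. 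Thus $E(F_\infty)[\ell]=0$, so $E(F_\infty)[\ell^\infty]=0$, for every odd $\ell$. For $\ell=2$, Lemma~\ref{mcproperties}(2) shows $E[2]\subset E(\Q)$, so $\rho_{E,2}$ factors through the pro-$2$ group $\ker(\GL_2(\Z_2)\to\GL_2(\Z/2\Z))$ and $\Q(E[2^\infty])/\Q$ is pro-$2$; since the maximal pro-$2$ subextension of $F_\infty/\Q$ is $F$, any point of $E(F_\infty)[2^\infty]$ has coordinates in $\Q(E[2^\infty])\cap F_\infty\subseteq F$ and so lies in $E(F)$. Combining with the finiteness of $E(F_\infty)$ gives $E(F_\infty)=E(F_\infty)[2^\infty]\subseteq E(F)$, as needed.

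The main obstacle is the finiteness step: the Iwasawa-theoretic inputs (Proposition~\ref{15growth}) control $E$ and $E_d$ only over $\Q_\infty$, not over $F_\infty$ directly, so the quadratic-twist isomorphism of Lemma~\ref{twist} must be used to exhibit $E(F_\infty)$, up to a $2$-torsion discrepancy, as assembled from $E(\Q_\infty)$ and $E_d(\Q_\infty)$. Once finiteness is established, the torsion-growth analysis is routine, relying only on the surjectivity of the mod-$\ell$ representations and the $\Q$-rationality of the full $2$-torsion recorded in Lemma~\ref{mcproperties}.
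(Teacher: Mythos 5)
Your proposal is correct and takes essentially the same route as the paper: the identity $2P=(P+\sigma P)+(P-\sigma P)$ combined with Lemma \ref{twist} and Proposition \ref{15growth} (1), (2), followed by the pro-$2$ intersection argument showing $2$-power torsion points of $X_0(15)(F_\infty)$ already lie in $X_0(15)(F)$. The only (harmless) difference is that the paper observes $X_0(15)(\Q_\infty)=X_0(15)(\Q)$ and $X_0(15)_d(\Q_\infty)=X_0(15)_d(\Q)$ are finite $2$-groups (Lemma \ref{mcproperties} (2), Lemma \ref{twistedmcproperties} (2)), so $X_0(15)(F_\infty)\subset X_0(15)[2^\infty]$ follows at once, whereas you first prove finiteness and then eliminate odd torsion over $F_\infty$ by a separate, correct, mod-$\ell$ image argument.
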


\begin{prop}
\label{keyprop21}
Let $p$ be a prime number such that $p\neq 2, 3$ or $7$, and $d>1$ be a square-free integer such that $(d,3p)=1$ or $(d,7p)=1$.
Assume that $L(X_0(15)_d,1)/\Omega_{X_0(15)_d}$ is a $p$-adic unit.
Then we have $X_0(21)(\Q(\sqrt{d})_\infty)=X_0(21)(\Q(\sqrt{d}))$.
\end{prop}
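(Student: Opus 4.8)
The plan is to deduce the equality over $\Q(\sqrt{d})_\infty$ from the two finiteness results over $\Q_\infty$ supplied by Proposition \ref{21growth}, following the same strategy as for Proposition \ref{keyprop15}. First I would record the field-theoretic fact that, since $\Q_\infty/\Q$ is a pro-$p$ extension with $p$ odd while $\Q(\sqrt{d})/\Q$ is quadratic, one has $\Q(\sqrt{d})\cap\Q_\infty=\Q$; hence $\Q(\sqrt{d})_\infty=\Q_\infty(\sqrt{d})$ and $\Gal(\Q(\sqrt{d})_\infty/\Q_\infty)=\langle\sigma\rangle$ has order $2$, where $\sigma$ fixes $\Q_\infty$ and sends $\sqrt{d}\mapsto-\sqrt{d}$. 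Setting $L=\Q(\sqrt{d})_\infty$ and $A=X_0(21)(L)$, the involution $\sigma$ separates $A$ into eigenparts: $A^{\sigma=+1}=X_0(21)(\Q_\infty)$, and by Lemma \ref{twist} applied with base field $\Q_\infty$ one has $A^{\sigma=-1}\simeq X_0(21)_d(\Q_\infty)$.

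Next I would apply Proposition \ref{21growth}, which gives $X_0(21)(\Q_\infty)=X_0(21)(\Q)$ and $X_0(21)_d(\Q_\infty)=X_0(21)_d(\Q)$, so that both eigenparts are finite. The second of these uses Proposition \ref{21growth}(2), whose hypothesis is that $L(X_0(21)_d,1)/\Omega_{X_0(21)_d}$ is a $p$-adic unit; the remaining conditions on $p$ and $d$ match those of Proposition \ref{21growth} verbatim. The $L$-value condition recorded in the present statement refers to $X_0(15)_d$, but since Proposition \ref{keyprop21} is the exact analogue of Proposition \ref{keyprop15} and its proof must run through Proposition \ref{21growth}, this is evidently a misprint for the corresponding condition on $X_0(21)_d$, which is what the argument requires. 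Granting this, for any $P\in A$ the identity $2P=(P+\sigma P)+(P-\sigma P)$ shows that the homomorphism $P\mapsto(P+\sigma P,\,P-\sigma P)$ has kernel contained in $A[2]$ and lands in the finite group $X_0(21)(\Q_\infty)\oplus X_0(21)_d(\Q_\infty)$; hence $A=X_0(21)(\Q(\sqrt{d})_\infty)$ is finite, and in particular has rank $0$.

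It then remains to compare the torsion of $A$ with that of $X_0(21)(\Q(\sqrt{d}))$, one prime at a time. For an odd prime $l$, multiplication by $2$ is invertible on the $l$-primary part, so the $\sigma$-decomposition is exact there and yields $A[l^\infty]^{\sigma=+1}=X_0(21)(\Q_\infty)[l^\infty]=0$ and $A[l^\infty]^{\sigma=-1}\simeq X_0(21)_d(\Q_\infty)[l^\infty]=0$ by Lemma \ref{mcproperties}(3) and Lemma \ref{twistedmcproperties}(3); thus $A[l^\infty]=0$ for every odd $l$. The delicate prime is $l=2$, where the eigenspace splitting breaks down: here I would instead use that $\Q(X_0(21)[2^\infty])/\Q$ is a pro-$2$ extension (as established in the proof of Lemma \ref{mcproperties}(3)), while $\Q(\sqrt{d})_\infty/\Q(\sqrt{d})$ is pro-$p$ with $p$ odd, so the two are linearly disjoint over $\Q(\sqrt{d})$ and no new $2$-power torsion is acquired; that is, $A[2^\infty]=X_0(21)(\Q(\sqrt{d}))[2^\infty]$. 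Since $A$ is finite with trivial odd part, we conclude $A=A[2^\infty]=X_0(21)(\Q(\sqrt{d}))[2^\infty]\subseteq X_0(21)(\Q(\sqrt{d}))\subseteq A$, forcing equality throughout. I expect the $l=2$ step to be the main obstacle: the clean $\sigma$-eigenspace argument only pins down $A$ up to $2$-torsion, so ruling out growth of the $2$-power torsion — via the pro-$2$ versus pro-$p$ disjointness — is the crux of the proof.
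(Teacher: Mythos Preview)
Your proposal is correct and follows essentially the same route as the paper: both proofs use the $\sigma$-eigenspace decomposition together with Proposition~\ref{21growth} and Lemma~\ref{twist} to control $X_0(21)(\Q(\sqrt{d})_\infty)$, and then finish by the pro-$2$ versus pro-$p$ disjointness of $F(X_0(21)[2^\infty])/F$ and $F_\infty/F$. The only cosmetic difference is that the paper argues in one step that $X_0(21)(F_\infty)\subset X_0(21)[2^\infty](\overline{F})$ (since both eigenparts are already $2$-groups by Lemma~\ref{mcproperties}(2) and Lemma~\ref{twistedmcproperties}(2)) rather than first establishing finiteness and then treating odd primes separately; your identification of the $X_0(15)_d$/$X_0(21)_d$ misprint in the hypothesis is also correct.
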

The proof of these propositions are exactly the same, and so we only prove Proposition \ref{keyprop21}.
\begin{proof}[Proof of Proposition \ref{keyprop21}]
Write $X=X_0(21)$ and $F=\Q(\sqrt{d})$ to simplify the notation. Also, let $\sigma$ denote the nontrivial element of $\Gal(F_\infty/\Q_\infty)\cong \Gal(F/\Q)$.

We claim that $X(F_\infty)\subset X[2^\infty](\overline{F})$.
Note that
\[2X(F_\infty)\subset X(F_\infty)^{\sigma=\mathrm{id}}+X(F_\infty)^{\sigma=-\mathrm{id}}\]
because for any $P\in X(F_\infty)$ we have $2P=(P+\sigma P)+(P-\sigma P)$.
By Proposition \ref{21growth} (1) and Lemma \ref{mcproperties} (2), we have 
\[X(F_\infty)^{\sigma=\mathrm{id}}=X(\Q_\infty)
=X(\Q) 
\simeq \Z/2\Z\times \Z/4\Z.\]
Also, by Lemma \ref{twist}, Proposition \ref{21growth} (2), and Lemma \ref{twistedmcproperties} (2), we have
\[X(F_\infty)^{\sigma=-\mathrm{id}}\simeq X_d(\Q_\infty)=X_d(\Q)\subset X_d[2^\infty](\overline{\Q}).\]
This shows the claim.

Let $P\in X(F_\infty)$. By the above claim, $P$ is also in  $X[2^\infty](\overline{F})$.
Thus the coordinates of $P$ belongs to $F_\infty\cap F(X[2^\infty])$. By the same argument as in the proof of Lemma \ref{mcproperties} (3) for $l=2$, we obtain $F_\infty\cap F(X[2^\infty])=F$ and hence we have $P\in X(F)$.
\end{proof}
\subsection{Completing the proof of Theorem \ref{main5} and Theorem \ref{main7}}

Theorem \ref{main5} follows from  Corollary \ref{reduce57} (1) and Proposition \ref{keyprop15}.
Also, Theorem \ref{main7} follows from Corollary \ref{reduce57} (2) and Proposition \ref{main7}.
\section{Examples}
\label{examples}
In this section, we see some examples where Theorem \ref{main5} or Theorem \ref{main7} can be applied.
First we give a description of the conductor of quadratic twists of a semi-stable elliptic curve over $\Q$.


Let $E$ be an elliptic curve over $\Q$ of square-free conductor $N_E$.
Let $d$ be a square-free integer
and $D$ be the discriminant of the quadratic field $\Q(\sqrt{d})$; that is, $D=d$ if $d\equiv 1$ mod $4$ and $D=4d$ otherwise.
For each prime $p$, let $c_p$ be the $p$-part of the conductor of the quadratic extension $\Q(\sqrt{d})/\Q$; it is given by
\[c_p=
\begin{cases}
1 & \text{if } p\mid d\\
0 & \text{if } p\nmid d
\end{cases}
\]
if $p$ is odd, and 
\[c_2=
\begin{cases}
0 & \text{if } d\equiv 1 \text{ mod } 4\\
3 & \text{if }  d\equiv 2 \text{ mod } 4\\
2 & \text{if }  d\equiv 3 \text{ mod } 4.
\end{cases}
\]
If we write $N_{E_d}=\prod_{p}p^{e_p}$ for the conductor of the quadratic twist $E_d$, then the exponent $e_p$ of $p$ is given as follows:
\begin{itemize}
    \item If $p\nmid N_E$, then $e_p=2c_p$.
   \item Otherwise, $e_p=2c_p$; that is, $e_p=2c_p$ if $p\mid D$, and $e_p=0$ if $p\nmid N_E D$.
\end{itemize}

Using this description, we can compute the conductor of the quadratic twists $X_0(15)_d$ and $X_0(21)_d$.
Examining LMFDB \cite{L}, we then find the values $L(X_0(15)_d,1)/\Omega_{X_0(15)_d}$ and $L(X_0(21)_d,1)/\Omega_{X_0(21)_d}$.

Table 1 below is a list of $L(X_0(15)_d,1)/\Omega_{X_0(15)_d}$ for several $d$ as considered in Theorem \ref{main5} (or in Remark \ref{equiv} (1)); that is, $d>1$ is a square-free integer such that  $d\neq 5$ and that $(d,3)=1$ or $(d,5)=1$. The fourth column  in Table 1 lists prime numbers $p$ such that Theorem \ref{main5} can be applied for the given $d$. When the value $L(X_0(15)_d,1)/\Omega_{X_0(15)_d}$ vanishes, the assumption of Theorem \ref{main5} is not satisfied for any choice of $p$; in this case, we write ``none'' at the entry of the table.

Similarly, Table 2 below shows the values of $L(X_0(21)_d,1)/\Omega_{X_0(21)_d}$ for several $d$ and prime numbers $p$ for which Theorem \ref{main7} can be applied for the given $d$. Note that we list $d$'s which satisfy the conditions of Theorem \ref{main7} (or Remark \ref{equiv} (2)); that is, $d>1$ is a square-free integer with $7\nmid d$.
\begin{table}[h]
  \begin{center}
    \caption{quadratic twists $X_0(15)_d$} 
	\label{15twists} 
    \begin{tabular}{ccccc}\hline 
      $d$  & $N_{X_0(15)_d}$   & \begin{tabular}{c}Cremona label of\\ $X_0(15)_d$\end{tabular} & $L(X_0(15)_d,1)/\Omega_{X_0(15)_d}$ & $p$ \\
      \hline
      $2$ & $2^6 \cdot 3 \cdot 5$ & 960g3 & $2$ & $p\neq 2,3,5$ \\
      $3$ & $2^4\cdot 3^2\cdot 5$ & 720h3 & $0$ & none \\
      $6$ & $2^6\cdot 3^2\cdot 5$ & 2880bd3 & $4$ & $p\neq 2,3,5$ \\
      $7$ & $2^4\cdot 3\cdot 5\cdot 7^2$ & 11760bq3 & $0$ & none\\
      $10$ & $2^6\cdot 3\cdot 5^2$ & 4800b3 & $0$ & none \\
      $11$ & $2^4\cdot 3\cdot 5\cdot 11^2$ & 29040dg3 & $0$ & none \\
      $13$ & $3\cdot 5\cdot 13^2$ & 2535a3 & $0$ & none\\
      $14$ & $2^6\cdot 3\cdot 5\cdot 7^2$ & 47040hg3 & $0$ & none\\
      $17$ & $3\cdot 5\cdot 17^2$ & 4335d3 & $2$ & $p\neq 2,3,5,17$\\
      $19$ & $2^4\cdot 3\cdot 5\cdot 19^2$ & 86640cm3 & $8$ & $p\neq 2,3,5,19$\\
      $21$ & $3^2\cdot 5\cdot 7^2$ & 2205j3 & $ 4 $ & $p\neq 2,3,5,7$ \\
      $22$ & $2^6\cdot 3\cdot 5\cdot 11^2$ & 116160ez3 & $0$ & none\\
      $23$ & $2^4\cdot 3\cdot 5\cdot 23^2$ & 126960cj3 & $8$ & $p\neq 2,3,5,23$\\
      $26$ & $2^6\cdot 3\cdot 5\cdot 13^2$ & 162240ez4 & $0$ & none\\
      $29$ & $3\cdot 5\cdot 29^2$ & 12615f3 & $0$ & none \\
      $31$ & $2^4\cdot 3\cdot 5\cdot 31^2$ & 230640bg4 & $8$ &  $p\neq 2,3,5,31$\\
      $33$ & $3^2\cdot 5\cdot 11^2$ & 5445g3 & $0$ & none \\
      $34$ & $2^6\cdot 3\cdot 5\cdot 17^2$ & 277440do4 & $0$ & none\\
      $35$ & $2^4\cdot 3\cdot 5^2\cdot 7^2$ & 58800it3 & $16$ &  $p\neq 2,3,5,7$\\
      $37$ & $3\cdot 5\cdot 37^2$ & 20535a3 & $0$ & none \\
      $38$ & $2^6\cdot 3\cdot 5\cdot 19^2$ & 346560gv4 & $0$ &  none\\
      $39$ & $2^4\cdot 3^2\cdot 5\cdot 13^2$ & 121680en3 & $16$ & $p\neq 2,3,5,13$ \\
      $41$ & $3\cdot 5\cdot 41^2$ & 25215h3 & $0$ & none \\
      \hline
    \end{tabular}
  \end{center}
\end{table}
\newpage
\begin{table}[h]
  \begin{center}
    \caption{quadratic twists $X_0(21)_d$} 
	\label{21twists} 
    \begin{tabular}{ccccc}\hline 
      $d$   & $N_{X_0(21)_d}$  & \begin{tabular}{c}Cremona label of\\ $X_0(21)_d$ \end{tabular}& $L(X_0(21)_d,1)/\Omega_{X_0(21)_d}$ & $p$\\
      \hline
      $2$ & $2^6 \cdot 3 \cdot 7$ & 1344a2 & $0$ & none\\
      $3$ & $2^4\cdot 3^2\cdot 7$& 1008k2 & $2$ & $p\neq 2,3,7$\\
      $5$ & $3\cdot 5^2\cdot 7$ & 525b2 & $1$ & $p\neq  2,3,5,7$\\
      $6$ & $2^6\cdot 3^2\cdot 7$ & 4032bm2 & $2$ & $p\neq 2,3,7$\\
      $10$ & $2^6\cdot 3\cdot 5^2\cdot 7$ & 33600dd2 & $0$ & none\\
      $11$ & $2^4\cdot 3\cdot 7\cdot 11^2$ & 40656bk2 & $0$ & none \\
      $13$ & $3\cdot 7\cdot 13^2$ & 3549c2 & $0$ & none\\
      $15$ & $2^4\cdot 3^2\cdot 5^2\cdot 7$ & 25200dx2 & $0$ & none\\
      $17$ & $3\cdot 7\cdot 17^2$ & 6069b2 & $1$ & $p\neq 2,3,7,17$\\
      $19$ & $2^4\cdot 3\cdot 7\cdot 19^2$ & 121296dk2 & $0$ & none \\
      $22$ & $2^6\cdot 3\cdot 7\cdot 11^2$ & 162624bj2 & $8$ & $p\neq 2,3,7,11$\\
      $23$ & $2^4\cdot 3\cdot 7\cdot 23^2$ & 177744ca2 &  $0$ & none\\
      $26$ & $2^6\cdot 3\cdot 7\cdot 13^2$ & 227136ho2  & $4$ & $p\neq 2,3,7,13$\\
      $29$ & $3\cdot 7\cdot 29^2$ & 17661a2 & $0$ & none\\
      $30$ & $2^6\cdot 3^2\cdot 5^2\cdot 7$  & 100800me2 & $0$ & none\\
      $31$ & $2^4\cdot 3\cdot 7\cdot 31^2$ & 322896cn2 & $0$ & none\\
      $33$ & $3^2\cdot 7\cdot 11^2$ & 7623p2 & $2$ & $p\neq 2,3,7,11$\\
      $34$ & $2^6\cdot 3\cdot 7\cdot 17^2$ & 388416fo2 & $0$ & none\\
      $37$ & $3\cdot 7\cdot 37^2$ & 28749e2 & $8$ & $p\neq 2,3,7,37$\\
      $38$ & $2^6\cdot 3\cdot 7\cdot 19^2$ & 485184dx2 & $4$ & $p\neq 2,3,7,19$\\
      $39$ & $2^4\cdot 3^2\cdot 7\cdot 13^2$  & 170352t2 & $0$ & none\\
      $41$ & $2^4\cdot 3\cdot 7\cdot 43^2$ & 35301e2 & $1$ & $p\neq 2,3,7,41$\\
      \hline
    \end{tabular}
  \end{center}
\end{table}

From Table 1 and Table 2, we observe the following:
\begin{itemize}
    \item Note that $d=7,14,21$, and $35$ appear only in Table 1. Among them, for $d=21$ and $35$ the value $(X_0(15)_d,1)/\Omega_{X_0(15)_d}$ do not vanish, and hence Theorem \ref{main5} can be applied in these cases.
    \item Note that $d=5,15$, and $30$ appear only in Table 2.
    The value $(X_0(15)_d,1)/\Omega_{X_0(15)_d}$ for $d=5$ does not vanish, and hence Theorem \ref{main7} can be applied (for $p=2$ or $p\geq 11$.)
    \item For $d=10,11,13,29$, and $34$, the central $L$-values vanish in both tables vanish, and hence we cannot use Theorem \ref{main5} or Theorem \ref{main7}  for such $d$'s.
    \item For $d=2,3,6,17,19,22,23,26,31,33,37,38,39$,  and $41$,  $L(X_0(15)_d,1)/\Omega_{X_0(15)_d}$ or
    $L(X_0(21)_d,1)/\Omega_{X_0(21)_d}$ do not vanish, and hence Theorem \ref{main5} or Theorem \ref{main7} can be applied in these cases (for almost all primes $p$).
\end{itemize}

\subsection*{Acknowledgement}
The author would like to express his sincere gratitude to Tetsushi Ito for  his continuous encouragement and invaluable advice.  This work was also supported by JSPS KAKENHI Grant Number 19K14514.

\bibliographystyle{amsplain}

\end{document}